\numberwithin{equation}{section}
\newtheorem{theorem}{Theorem}[section]
\newtheorem{remark}{Remark}[section]
\newtheorem{proposition}{Proposition}[section]
\newtheorem{lemma}{Lemma}[section]
\newtheorem{assumption}[theorem]{Assumption}
\newcommand{\red}[1]{{\color{black} #1}}
\def\al{\alpha}
\def\dx{\mathrm{d} x}
\def\dt{\mathrm{d} t}
\def\ds{\mathrm{d} s}
\def\dz{\mathrm{d} z}
\def\II{(\Omega)}
\def\la{\lambda}
\def\IPP{IPP}
\def\P01{P_\mathcal{A}}
\def\dtau{\overline{\partial}_{\tau}^{\alpha}}
\title{\red{Determining a Time-Varying} Potential in Time-Fractional Diffusion from Observation at a Single Point\thanks{The work of K. Shin is supported by Basic Science Research Program through the National Research Foundation of Korea
(NRF) funded by the Ministry of Education (Grant No. 2019R1A6A1A11051177) and the National Research Foundation of Korea(NRF) grant funded by the Korea government(MSIT)(RS-2024-00350215). The work of  Z. Zhou is supported by Hong Kong Research Grants Council (No. 15303021) and an internal grant of Hong Kong Polytechnic University (Project ID: P0038888, Work Programme: 1-ZVX3).}}
\author{Siyu Cen\thanks{Department of Applied Mathematics,
The Hong Kong Polytechnic University, Kowloon, Hong Kong, P.R. China (\texttt{siyu2021.cen@connect.polyu.hk}).}
\and Kwancheol Shin\thanks{Institute of Mathematical Science at Ewha Womans University, 52, Ewhayeodae-gil, Seodaemun-gu, Seoul
03760, Republic of Korea (\texttt{kcshin3623@gmail.com})}
\and Zhi Zhou\thanks{Department of Applied Mathematics,
The Hong Kong Polytechnic University, Kowloon, Hong Kong, P.R. China (\texttt{ zhizhou@polyu.edu.hk})}}
\begin{document}

\maketitle

\begin{abstract}
We discuss the identification of a time-dependent potential in a time-fractional diffusion model from a boundary measurement taken at a single point. Theoretically, we establish a conditional Lipschitz stability for this inverse problem. Numerically, we develop an easily implementable iterative algorithm to recover the unknown coefficient, and also derive rigorous error bounds for the discrete reconstruction. These results are attained by leveraging the (discrete) solution theory of direct problems, and applying error estimates that are optimal with respect to problem data regularity. Numerical simulations are provided to demonstrate the theoretical results.

\vskip5pt
\noindent\textbf{Keywords}:time-fractional diffusion, inverse potential problem, Lipschitz stability, numerical recovery, error analysis
\end{abstract}

\section{Introduction}\label{intro}
In this paper, we aim to study the numerical treatment for an inverse potential problem for \red{a time-fractional} diffusion model. Consider the convex polyhedral domain $\Omega \subset \mathbb{R}^d$, with $1 \le d \le 3$, and let $\partial\Omega$ denote the boundary of $\Omega$. Consider the initial-boundary value problem: 
 \begin{equation}\label{eqn:fde}
 \left\{\begin{aligned}
     \partial_t^\alpha u  -  \Delta u  + \rho(t)u &=f , &&\mbox{in } \Omega\times{(0,T]},\\
      \partial_\nu u &=0,&&\mbox{on } \partial\Omega\times{(0,T]},\\
    u(0)&=u_0 ,&&\mbox{in }\Omega,
  \end{aligned}\right.
 \end{equation}
where $T > 0$ denotes the prescribed final time, $u_0$ is the given initial condition, $f$ represents the source term which depends on the spatial variable, and $\nu$ is the outward unit normal vector \red{to the boundary} $\partial\Omega$. The notation $\partial^{\alpha}_t u $, with $\alpha\in(0,1)$, denotes the Djrbashian--Caputo fractional derivative of order $\alpha$, defined by 
\begin{equation}\label{eqn:frac_der}
    \partial^{\alpha}_t u(t) = \frac{1}{\Gamma (1 - \alpha)} \int_0^t (t-s)^{-\alpha} u'(s){\rm d}s,
\end{equation}
where $\Gamma(z) = \int_0^{\infty} s^{z-1}e^{-s}\ds$ with $\Re (z)>0$ denotes Gamma function.

In recent years, fractional evolution models have garnered considerable interest for their exceptional ability to characterize the anomalous diffusion phenomenon, which is prevalent across a wide spectrum of engineering and physical contexts. The array of successful applications is extensive and continuously expanding. Notable examples include the diffusion of proteins within cellular environments \cite{Golding:2006}, the movement of contaminants through groundwater systems \cite{Kirchner:2000}, and memory-dependent heat conduction processes \cite{Wolfersdorf:1994}, among others. For an in-depth exploration of the derivation of pertinent mathematical models and their numerous applications in the realms of physics and biology, readers are directed to thorough reviews \cite{MetzlerKlafter:2000,MetzlerJeon:2014} and detailed monographs \cite{Du:book,Jin:2021}.

In this paper, we address the inverse potential problem 
(\IPP) described as follows: for a fixed point $x_0\in \overline\Omega$, our objective is to reconstruct the potential function $\rho^{\dag}(t)$ from the single point measurement $u^{\dag}(x_0,t)$ with $t\in [0,T]$.
We let $u^{\dag}$ denote the exact solution with the exact potential $\rho^{\dag}$ that belongs to admissible set 
\begin{equation}\label{eqn:admissible_set}
    \mathcal{B} =\{\rho\in C[0,T]:0\le \rho \le \overline{c}_\rho\},
\end{equation}
where $\overline{c}_\rho>0$ is a constant.  
In practical scenarios, the actual measurement, denoted by $g_\delta(t)$, typically contains noise. We assume that $g_\delta(t)$ satisfies the following noise condition
\begin{equation}\label{eqn:noise}
\|g_\delta - u(x_0,t;\rho^\dag)\|_{C[0,T]} = \delta
\end{equation}
where $\delta$ denotes the noise level.

This research provides the following contributions. First of all, we establish a conditional Lipschitz stability estimate under some mild assumptions on problem data, \red{as given in }Theorem \ref{thm:stab}. 
This stability estimate leverages the smoothing properties of the direct problem and employs a carefully selected weighted $L^p$ norm.
Our second contribution, presented in Theorem \ref{thm:recon}, involves the development of a practical, fully discrete fixed-point algorithm. 
Moreover, we analyze the error for the discrete reconstruction $(\rho_*^n)_{n=1}^N$ in the $\ell^p$ norm:
\begin{equation}\label{eqn:error-intro}
  \| [\rho_*^n  -  \rho^\dag(t_n)]_{n=1}^N \|_{\ell^p} \le   c\left(\tau^{1/p}|\log\tau | + h^{2}|\log h|^3+\tau^{-\al} \delta \right)
\end{equation}
for any $p\in(1, \infty)$. This estimate provides clear guidance for selecting algorithmic parameters, such as spatial mesh size $h$ and temporal step size $\tau$, in relation to the noise level $\delta$. The outcome is achieved through the application of the weighted $\ell^p$ norm, combined with error estimates that are optimally aligned with the regularity of the problem data for the direct problem. 

The research into inverse problems for time-fractional evolution models began more recently, with significant contributions originating from \cite{ChengNakagawa:2009} (see \cite{LiYamamoto:2019a, LiuLiYamamoto:2019c} for some recent overviews) and numerous studies have focused on the reconstruction of a space-dependent potential or conductivity from lateral Cauchy data \cite{RundellYamamoto:2020,JingYamamoto:2021,KianLiLiu:2020,CenJinLiuZhou:2023,JinZhou:IP2021} or from the terminal data \cite{KaltenbacherRundell:2020,KaltenbacherRundell:2019,ZhangZhangZhou:2022,JinKianZhou:2023,JinLuQuanZhou:2024}. Although significant research has been conducted on inverse problems associated with time-independent elliptic operators employing Mittag--Leffler functions or Laplace transform, the study of analogous inverse problems \red{involving an elliptic operator} that varies with time remains notably less developed due to the inapplicability of these tools. 
The unique identification of a time-dependent diffusion coefficient in a one-dimensional model from lateral flux observations was discussed in \cite{Zhang:2016}.  In \cite{ZhangZhou:2023}, 
the backward problem with a time-dependent elliptic operator was examined in case of sufficiently small or large terminal time. Fujishiro and Kian \cite{FujishiroKian:2016} examined the same inverse problem discussed in this paper, establishing a similar stability estimate as shown in Theorem \ref{thm:stab}. 
However, no reconstruction algorithm or numerical analysis exists.
In the current work, we revisit the problem and demonstrate stability through a different approach, utilizing the smoothing properties of the solution operator and choosing an appropriate weighted $L^p$ norm. This stability analysis also inspires an iterative algorithm as well as its convergence analysis. Moreover, an error estimate for the numerical discretization is provided. This is achieved by using the proposed stability analysis and appropriate error estimates that are optimal with respect to the regularity of the problem data for the direct problem. In \cite{JinShinZhou:2023,MaSun:2023}, the authors investigated the stable recovery of a time-dependent potential from the integral measurement $\int_{\Omega} u(x,t)\, \mathrm{d} x$ for all $t \in [0,T]$. The problem addressed in the current paper poses greater challenges. Here, the observation $u(x_0,t)$ demands higher regularity, and the term $\Delta u(x_0, t)$, which cannot be computed directly, introduces additional difficulties in numerical analysis. 
 
The remainder of the paper is structured as follows.
Section \ref{sec:prelim} is dedicated to gathering foundational results related to the forward problem, including well-posedness and regularity estimates. A  Lipschitz-type stability of the IPP is established in Section \ref{sec:stability}. In Section \ref{sec:error_estimate}, we develop an iterative algorithm, accompanied by an exhaustive error analysis for the numerical reconstruction. Finally, Section \ref{sec:numerics} offers numerical experiments that demonstrate the efficacy of our numerical approach. Throughout, we use  $c$ to represent a generic constant whose value may vary with each instance of use, but remains independent of variables such as the noise level $\delta$, the spatial mesh size $h$, the temporal step size $\tau$, the iteration number $k$, and so forth.

\section{Preliminaries}\label{sec:prelim}
In this section, we will present foundational results concerning the solution operators, their smoothing properties, and the well-posedness of the problem. These results will be extensively utilized in the subsequent sections.

Let $A = -\Delta + c_0 \mathrm{I}$ with homogeneous Neumann boundary condition, where the domain is defined by $D(A):=\{v\in L^2(\Omega): -\Delta v  + c_0 v \in L^2(\Omega), \partial_\nu v|_{\partial\Omega}=0\}$ and a fixed constant $c_0 > 0$. Let $\{\la_\ell\}_{\ell=1}^\infty$ and $\{\varphi_\ell\}_{\ell=1}^\infty$ be eigenvalues and eigenfunctions of $A$, respectively. Here we denote the eigenvalues $\{\la_\ell\}_{\ell=1}^\infty$ ordered nondecreasingly with multiplicity counted and the corresponding \red{orthonormal eigenfunctions $\{\varphi_\ell\}_{\ell=1}^\infty$ in $L^2(\Omega)$.}
Then we define 
\begin{equation*}
    A^s v=\sum_{\ell=1}^{\infty}\la_\ell^s (v,\varphi_\ell)\varphi_\ell,~~s\ge 0
\end{equation*}
with its domain $D(A^s) = \{v\in L^2\II:\, A^s v\in L^2\II\}$. 

In complex plane $\mathbb{C}$, we define the sector  $\Sigma_{\theta}:=\{0\neq z\in\mathbb{C}: {\rm arg}(z)\leq\theta\}$ with some fixed $\theta\in(\pi/2,\pi)$.
Then the elliptic operator $A$ satisfies the  resolvent estimate \cite[Theorem 3.7.11]{Arendt:book}: for any $z \in \Sigma_{\theta}$
\begin{equation} \label{eqn:resol}
  \| (z +A)^{-1} \|_{L^2\II \rightarrow L^2\II}\le c_\theta |z|^{-1}.   
\end{equation}
Now we introduce the solution operator
\begin{equation}\label{eqn:sol_op}
    E(t):=\frac{1}{2\pi {\rm i}}\int_{\Gamma_{\theta,\kappa}}   (z^\alpha +A)^{-1}e^{zt}\, \dz \quad\mbox{and}\quad
 F(t):=\frac{1}{2\pi {\rm i}}\int_{\Gamma_{\theta,\kappa }}z^{\alpha-1} 
  (z^\alpha+A)^{-1}e^{zt}\, \dz \red{,}
\end{equation}
with the integration over a contour $\Gamma_{\theta,\kappa}$ in $\Sigma_{\theta}$, defined by
\begin{equation*}
  \Gamma_{\theta,\kappa}=\left\{z\in \mathbb{C}: |z|=\kappa, |\arg z|\le \theta\right\}\cup
  \{z\in \mathbb{C}: z=r e^{\pm\mathrm{i}\theta}, r\ge \kappa\} .
\end{equation*}
Note that the time-fractional diffusion problem \eqref{eqn:fde} could be written \red{in the abstract form}
\begin{equation}\label{eqn:fde-re}
 \partial_t^\alpha u(t) + Au(t) = (c_0 - \rho(t))u(t) + f \quad \forall~ t\in(0,T], \quad \text{with}~~ u(0) = u_0.
\end{equation} 
Then the solution to the direct problem \eqref{eqn:fde} could be represented as \cite[Section 6.2]{Jin:2021}
\begin{equation}\label{eqn:sol-rep}
\begin{aligned}
    u(t)=&F(t)u_0+\int_0^t E(t-s) (f-(\rho(s) - c_0)u(s))\ds.
\end{aligned}
\end{equation} 
In case that $f$ is independent of time, then we apply the  identity \red{$F'(t) = -AE(t)$} to derive
\begin{equation}\label{eqn:sol-rep-1}
\begin{aligned}
    u(t)= \red{F(t)\left(u_0-A^{-1}f\right)+A^{-1}f}-\int_0^t (\rho (s) - c_0)E(t-s) u(s) \ds,
\end{aligned}
\end{equation} 
Next, we present some smoothing properties of the solution operators, which will be used throughout the paper.
\begin{lemma}\label{lem:op} 
Let $F(t)$ and $E(t)$ \red{be the operators} defined in \eqref{eqn:sol_op}. Then, for any $s\in [0,1]$,  the following estimate holds
\begin{equation*}
    t^{s\alpha}\|A^{s}F(t)v\|_{L^2\II}+t^{1-(1-s)\alpha}\|A^{s}E(t)v\|_{L^2\II}\le c\|v\|_{L^2\II},\quad \text{for all} ~t>0,
\end{equation*}
where the positive constant $c$ is independent of $t$.
\end{lemma}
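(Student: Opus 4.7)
The plan is to exploit the contour integral representations in \eqref{eqn:sol_op} together with the resolvent estimate \eqref{eqn:resol}. The key intermediate step is to establish
\begin{equation*}
    \|A^s(z^\alpha+A)^{-1}\|_{L^2\II \to L^2\II} \le c|z|^{\alpha(s-1)}, \qquad z\in\Sigma_\theta,~~s\in[0,1].
\end{equation*}
For $s=0$ this follows from \eqref{eqn:resol} applied with $z^\alpha$ in place of $z$, which is legitimate because $z\in\Sigma_\theta$ with $\alpha\in(0,1)$ implies $z^\alpha\in\Sigma_{\alpha\theta}\subset\Sigma_\theta$. For $s=1$ the identity $A(z^\alpha+A)^{-1} = I - z^\alpha(z^\alpha+A)^{-1}$ combined with \eqref{eqn:resol} yields the uniform bound. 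The intermediate range $s\in(0,1)$ can be handled directly through the spectral decomposition: writing
\begin{equation*}
    \|A^s(z^\alpha+A)^{-1}v\|_{L^2}^2 = \sum_{\ell=1}^\infty \frac{\lambda_\ell^{2s}}{|z^\alpha+\lambda_\ell|^2}|(v,\varphi_\ell)|^2,
\end{equation*}
one uses the elementary bound $|z^\alpha+\lambda_\ell|\ge c(|z|^\alpha+\lambda_\ell)$ (valid since $\alpha\theta<\pi$, so $z^\alpha$ stays uniformly away from the negative real axis), and then maximizes $\lambda^{2s}/(|z|^\alpha+\lambda)^2$ over $\lambda>0$ (the maximum occurs at $\lambda = |z|^\alpha s/(1-s)$) to obtain the factor $|z|^{2\alpha(s-1)}$.

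Next, since $A^s$ commutes with the resolvent, I would differentiate under the integral sign to write
\begin{equation*}
    A^sF(t)v = \frac{1}{2\pi\mathrm{i}}\int_{\Gamma_{\theta,\kappa}} z^{\alpha-1}A^s(z^\alpha+A)^{-1}v\,e^{zt}\,\dz, \qquad A^sE(t)v = \frac{1}{2\pi\mathrm{i}}\int_{\Gamma_{\theta,\kappa}} A^s(z^\alpha+A)^{-1}v\,e^{zt}\,\dz.
\end{equation*}
Taking $L^2$ norms and inserting the resolvent-type bound above gives
\begin{equation*}
    \|A^sF(t)v\|_{L^2\II} \le c\|v\|_{L^2\II}\!\int_{\Gamma_{\theta,\kappa}}\!|z|^{\alpha s-1}|e^{zt}|\,|\dz|, \quad \|A^sE(t)v\|_{L^2\II} \le c\|v\|_{L^2\II}\!\int_{\Gamma_{\theta,\kappa}}\!|z|^{\alpha(s-1)}|e^{zt}|\,|\dz|.
\end{equation*}

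Finally, I would choose $\kappa = 1/t$ and evaluate the contour integrals by splitting $\Gamma_{\theta,1/t}$ into the circular arc $|z|=1/t$ and the two rays $z=re^{\pm\mathrm{i}\theta}$, $r\ge 1/t$. On the arc, $|e^{zt}|\le e$ and the arc length is proportional to $1/t$, giving contributions of order $t^{-\alpha s}$ and $t^{\alpha(1-s)-1}$, respectively. On the rays, $|e^{zt}| = e^{rt\cos\theta}$ decays exponentially since $\cos\theta<0$; the substitution $u=rt$ converts each integral to a convergent one and produces the same exponents in $t$. Combining yields $\|A^sF(t)v\|_{L^2\II}\le ct^{-\alpha s}\|v\|_{L^2\II}$ and $\|A^sE(t)v\|_{L^2\II}\le ct^{\alpha(1-s)-1}\|v\|_{L^2\II}$, which upon multiplication by $t^{s\alpha}$ and $t^{1-(1-s)\alpha}$ give the asserted bounds.

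The main technical obstacle is the uniform-in-$s$ resolvent estimate for $A^s(z^\alpha+A)^{-1}$; once that is in place, the contour computation is a routine Bromwich-type calculation. Particular care is needed in verifying the sector condition $\alpha\theta<\pi$ so that the rotated spectrum $z^\alpha$ remains separated from $-\lambda_\ell$, and in ensuring that every constant is tracked to be independent of $t$.
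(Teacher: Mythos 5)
Your argument is correct, but it is organized differently from the paper's. The paper dispatches the endpoint cases $s=0,1$ by citing \cite[Theorem 6.4]{Jin:2021} (which contains essentially the contour computation you carry out: the bound $\|A^s(z^\alpha+A)^{-1}\|_{L^2\II\to L^2\II}\le c|z|^{\alpha(s-1)}$ for $s\in\{0,1\}$ followed by the choice $\kappa=1/t$ and the arc/ray splitting), and then obtains the intermediate range $0<s<1$ by operator interpolation between the two endpoint estimates, citing \cite[Proposition 2.3]{Lions:1972}. You instead prove the fractional resolvent bound directly for every $s\in[0,1]$ via the spectral decomposition of $A$, using $|z^\alpha+\lambda|\ge c(|z|^\alpha+\lambda)$ and maximizing $\lambda^{2s}/(|z|^\alpha+\lambda)^2$ over $\lambda>0$, and only then run the contour estimate once, uniformly in $s$. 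Your route is more self-contained and makes the $s$-dependence of the constant explicit ($s^{s}(1-s)^{1-s}\le 1$), but it leans on the self-adjointness of $A=-\Delta+c_0$ with Neumann boundary conditions, which is what makes the eigenfunction expansion and the pointwise maximization available; the interpolation argument used in the paper applies to general sectorial operators satisfying only \eqref{eqn:resol} and so transfers verbatim to the non-self-adjoint and discrete settings used later (cf.\ the $L^\infty\II$ and finite element analogues in Section \ref{sec:error_estimate}). Two small points worth making explicit in a final write-up: the passage of $A^s$ under the contour integral should be justified by the closedness of $A^s$ together with the absolute convergence of the resulting integral, and the replacement of $\kappa$ by $1/t$ should be justified by analyticity of $z\mapsto(z^\alpha+A)^{-1}$ on $\Sigma_\theta$ (which follows from $z^\alpha\in\Sigma_{\alpha\theta}$ avoiding the spectrum $\{-\lambda_\ell\}$) and Cauchy's theorem.
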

\begin{proof}
When $s=0,1$, the estimates can be found in \cite[Theorem 6.4]{Jin:2021} using the resolvent estimate \eqref{eqn:resol}. When $0<s<1$, the result follows from standard interpolation theory \cite[Proposition 2.3]{Lions:1972}.
\end{proof}

The subsequent lemma details the well-posedness of the solution \red{to problem \eqref{eqn:fde}}. Comparable results were previously established in \cite[Theorem 2.1]{JinShinZhou:2023} through a fixed-point argument based on the smoothing properties delineated in Lemma \ref{lem:op}. Due to the similarity of the arguments, we do not repeat the proof here.

\begin{lemma}\label{thm:sol-reg}
Assume $\rho$ belongs to $\mathcal{B}$ and is  piecewise $C^1$, $u_0 \in D(A^{1+\gamma/2})$, and $f \in D(A^{\gamma/2})$, for a certain $\gamma$ where $\frac{d}{2}<\gamma<2$. Under these conditions, the problem \eqref{eqn:fde} uniquely determines a solution $u \in C^\alpha([0,T];D(A^{\gamma/2})) \cap C([0,T];D(A^{1+\gamma/2}))$. Furthermore, the fractional time derivative $\partial_t^\alpha u \in C([0,T];D(A^{\gamma/2}))$.
\end{lemma}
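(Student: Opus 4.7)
The plan is to run a standard Banach fixed-point argument on the mild-solution representation. Since $f$ is independent of time, I would work with the reformulation \eqref{eqn:sol-rep-1}, namely
\begin{equation*}
  u(t) = F(t)\bigl(u_0 - A^{-1}f\bigr) + A^{-1}f - \int_0^t (\rho(s)-c_0)\,E(t-s)\,u(s)\,\mathrm{d}s,
\end{equation*}
and view the right-hand side as a map $\Phi$ on the Banach space $X_{T_0} := C([0,T_0]; D(A^{1+\gamma/2}))$ equipped with the supremum norm. Because $A$ commutes with $F(t)$ and $E(t)$ and because $\rho(s)-c_0$ is a scalar, we can push $A^{1+\gamma/2}$ through, obtaining
\begin{equation*}
  \|A^{1+\gamma/2}\Phi(u)(t)\|_{L^2} \le \|A^{1+\gamma/2}(u_0-A^{-1}f)\|_{L^2} + \|A^{\gamma/2}f\|_{L^2} + c\,\overline{c}_\rho\int_0^t \|E(t-s)\|_{L^2\to L^2}\,\|A^{1+\gamma/2}u(s)\|_{L^2}\,\mathrm{d}s,
\end{equation*}
where Lemma~\ref{lem:op} with $s=0$ gives $\|E(t-s)\|_{L^2\to L^2} \le c(t-s)^{\alpha-1}$. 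Since $\alpha-1>-1$, the singularity is integrable and I can choose $T_0$ small enough so that $\Phi$ maps a closed ball of $X_{T_0}$ into itself and is a strict contraction there. Uniqueness and existence of a fixed point follow, and the argument can be repeated on $[T_0,2T_0]$, etc., to cover all of $[0,T]$.

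To upgrade the regularity to $u \in C^\alpha([0,T]; D(A^{\gamma/2}))$, I would estimate $A^{\gamma/2}(u(t+h)-u(t))$ for $0<h\ll 1$ by splitting the integral into $[0,t]$ and $[t,t+h]$ and using the identity $F(t+h)-F(t) = -\int_t^{t+h} AE(s)\,\mathrm{d}s$ together with Lemma~\ref{lem:op} (applied with $s=\gamma/2$ on the $F$-part and with $s$ close to $1$ on the $E$-part) to extract the factor $h^\alpha$; the piecewise $C^1$ assumption on $\rho$ ensures no additional singularities arise from jumps. Finally, the regularity $\partial_t^\alpha u \in C([0,T]; D(A^{\gamma/2}))$ is read off the equation \eqref{eqn:fde-re} itself: once we know $u \in C([0,T]; D(A^{1+\gamma/2}))$, we have $Au \in C([0,T]; D(A^{\gamma/2}))$, and both $(c_0-\rho(t))u(t)$ and $f$ also lie in $C([0,T]; D(A^{\gamma/2}))$ since scalar multiplication by $\rho$ commutes with $A^{\gamma/2}$.

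The main obstacle I anticipate is the time-Hölder estimate at the boundary $t=0$, where the operators $E(t)$ and $F(t)$ have the sharpest singularities and where the interplay between the prescribed regularity index $\gamma/2$ and the admissible range $s\in[0,1]$ in Lemma~\ref{lem:op} must be balanced carefully so that the exponent in the resulting time factor is exactly $\alpha$; the assumption $\gamma<2$ (so that $1+\gamma/2-\gamma/2=1$ is attainable in the smoothing bound) is what makes this balance possible. Aside from this bookkeeping, the argument is entirely parallel to \cite[Theorem 2.1]{JinShinZhou:2023}, which is why the authors refer to that reference rather than reproducing the details.
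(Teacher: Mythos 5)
Your proposal is correct and follows essentially the same route as the paper, which does not spell out the details but explicitly defers to the fixed-point argument of \cite[Theorem 2.1]{JinShinZhou:2023} built on the smoothing properties of $E(t)$ and $F(t)$ in Lemma \ref{lem:op} --- precisely the contraction-mapping scheme on $C([0,T_0];D(A^{1+\gamma/2}))$, the H\"older estimate via $F'(t)=-AE(t)$, and reading $\partial_t^\alpha u$ off the equation that you describe.
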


\section{Analysis of \IPP}\label{sec:stability}
Next, we prove a Lipshitz-type stability for the  IPP  under some mild condition. The stability result will further motivate us to design an iterative algorithm for the numerical inversion. 
We begin by proposing the following assumption.

\begin{assumption}\label{assum:stab}
We suppose that the following conditions hold valid.
\begin{itemize}
    \item [(i)] $\rho\in  \mathcal{B}$ and is   piecewise $C^1$, $u_0 \in D(A^{1+\gamma/2})$, $f\in D(A^{\gamma/2})$, for some $\frac{d}{2}<\gamma<2$. 
    \item [(ii)]  For $x_0\in \overline\Omega$,  there exist positive constants $\underline{c}_u$ and $\overline{c}_u$ such that 
  $\underline{c}_u\le u(x_0,t)\le \overline{c}_u$ for all $t\in[0,T]$.
\end{itemize}
\end{assumption}
Note Assumption \ref{assum:stab}(i) ensures the regularity result in Lemma \ref{thm:sol-reg}. Moreover, by maximum principle \cite[Theroem 12, Section 7.1]{Evans:1998book} Assumption \ref{assum:stab}(ii) holds true if $u_0,g,f$ are strictly positive and bounded. 

We now introduce some notation for norms of $L^p$ space. For $\omega \ge 0$, we define weighted $L^p$ norms by
\begin{equation}\label{eqn:weighted_norm}
 \|  u  \|_{L_\omega^p(0,T)} = \left\{\begin{aligned}
		& \Big(\int_0^T |e^{-\omega t} u(t)|^p \,\dt\Big)^{\frac1p},&& \text{with} ~~ p\in[0,\infty),\\
		&  \text{esssup}_{t\in(0,T)} e^{-\omega t}u(t) ,&& \text{with} ~~p = \infty.
	\end{aligned}\right.
\end{equation}
Note that for any $p \in [1,\infty]$, the standard $L^p(0,T)$ norm
is equivalent to $L_\omega^p(0,T)$ norm. In the context of a Banach space $X$, $L^p(0,T;X)$ denotes the Bochner space, $\| \cdot \|_{L^p(0,T;X)}$
and $\| \cdot \|_{L_\omega^p(0,T;X)}$ denotes the standard and weighted $L^p$ norm, respectively.

\begin{theorem}\label{thm:stab}
Assuming $\rho_1$, $\rho_2$, $u_0$ and $f$ fulfill the conditions specified in Assumption \ref{assum:stab}, let $u_1 = u(\rho_1)$ and $u_2 = u(\rho_2)$ represent the solutions to \eqref{eqn:fde} corresponding to the potentials $\rho_1$ and $\rho_2$, respectively. Then, for a given point $x_0 \in \overline{\Omega}$ and $p \in [1, \infty]$, \red{the following} stability estimate is valid:
\begin{equation*}
    \|\rho_1-\rho_2\|_{L^p(0,T)}\le c\|\partial_t^\alpha (u_1-u_2)(x_0,\cdot)\|_{L^p(0,T)}.
\end{equation*}
Here $c$ \red{depends on} $\underline{c}_u$ and $\overline{c}_u$ in Assumption \ref{assum:stab}(ii), $\alpha$, $\overline{c}_\rho$ ,  $\|f\|_{ D(A^{\gamma/2})}$, $\| A u_1(x_0,\cdot)\|_{C([0,T])}$ and $\|\partial_t^\alpha u_1(x_0,\cdot)\|_{C([0,T])}$. 
\end{theorem}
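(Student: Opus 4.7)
Setting $w=u_1-u_2$ and $\eta=\rho_1-\rho_2$, I subtract the two instances of \eqref{eqn:fde} to obtain $\partial_t^\alpha w-\Delta w+\rho_2 w=-\eta u_1$ with $w(0)=0$. Evaluating at $x_0$ and using the lower bound $u_1(x_0,t)\ge\underline{c}_u$ from Assumption \ref{assum:stab}(ii) isolates $\eta$ pointwise in $t$:
\begin{equation*}
|\eta(t)|\le \frac{1}{\underline{c}_u}\bigl(|\partial_t^\alpha w(x_0,t)|+|\Delta w(x_0,t)|+\overline{c}_\rho\,|w(x_0,t)|\bigr).
\end{equation*}
This is the only place the PDE is invoked directly; the rest of the argument shows that the last two terms can be absorbed after passing to a suitable weighted $L^p$ norm in $t$.

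To control the two trailing terms I work from the Duhamel representation
\begin{equation*}
w(t)=\int_0^t E(t-s)\bigl[(c_0-\rho_2(s))w(s)-\eta(s)u_1(s)\bigr]\,\ds,
\end{equation*}
coupled with the Sobolev embedding $D(A^{\gamma/2})\hookrightarrow C(\overline{\Omega})$ (valid for $\gamma>d/2$), which gives $|w(x_0,t)|\le c\|A^{\gamma/2}w(t)\|_{L^2\II}$ and $|\Delta w(x_0,t)|\le c(\|A^{1+\gamma/2}w(t)\|_{L^2\II}+\|A^{\gamma/2}w(t)\|_{L^2\II})$. Applying $A^{\gamma/2}$ to Duhamel and using Lemma \ref{lem:op} with $\|A^{\gamma/2}E(t-s)\|_{L^2\to L^2}\le c(t-s)^{\mu-1}$, $\mu=(1-\gamma/2)\alpha>0$, produces a Volterra inequality of order $\mu$. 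Applying $A^{1+\gamma/2}$ is the subtler step, since $\|AE(t)\|_{L^2\to L^2}\le c t^{-1}$ is non-integrable; instead I commute the full power $A^{1+\gamma/2}$ past $E(t-s)$ onto $u_1(s)$ and $w(s)$ themselves (both lying in $D(A^{1+\gamma/2})$ by Lemma \ref{thm:sol-reg}) and use only the integrable estimate $\|E(t-s)\|_{L^2\to L^2}\le c(t-s)^{\alpha-1}$, yielding a second Volterra inequality of order $\alpha$. The resulting multiplicative constants involve $\|A^{1+\gamma/2}u_1\|_{C([0,T];L^2\II)}$, which via the identity $Au_1=-\partial_t^\alpha u_1-\rho_1 u_1+f+c_0 u_1$ is in turn governed by $\|Au_1(x_0,\cdot)\|_{C([0,T])}$, $\|\partial_t^\alpha u_1(x_0,\cdot)\|_{C([0,T])}$, $\|f\|_{D(A^{\gamma/2})}$, and the constants $\overline{c}_u,\overline{c}_\rho$.

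Now I take the weighted norm $L^p_\omega(0,T)$ and invoke Young's convolution inequality
\begin{equation*}
\|K_\mu*\phi\|_{L^p_\omega(0,T)}\le \|K_\mu\|_{L^1_\omega(0,T)}\|\phi\|_{L^p_\omega(0,T)},\qquad \|K_\mu\|_{L^1_\omega(0,T)}\le \frac{\Gamma(\mu)}{\omega^\mu},
\end{equation*}
where $K_\mu(t)=t^{\mu-1}$ and the right-hand side tends to $0$ as $\omega\to\infty$. For $\omega$ sufficiently large, this absorbs the self-coupling of $w$ in both Volterra inequalities, yielding
\begin{equation*}
\|A^{\gamma/2}w\|_{L^p_\omega(0,T;L^2\II)}+\|A^{1+\gamma/2}w\|_{L^p_\omega(0,T;L^2\II)}\le C(\omega)\|\eta\|_{L^p_\omega(0,T)},
\end{equation*}
with $C(\omega)\to 0$. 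Substituting back into the pointwise inequality and taking $\|\cdot\|_{L^p_\omega(0,T)}$ gives
\begin{equation*}
\underline{c}_u\|\eta\|_{L^p_\omega}\le \|\partial_t^\alpha w(x_0,\cdot)\|_{L^p_\omega}+(1+\overline{c}_\rho)C(\omega)\|\eta\|_{L^p_\omega}.
\end{equation*}
Choosing $\omega$ large enough that $(1+\overline{c}_\rho)C(\omega)\le \underline{c}_u/2$, absorbing the right-most term, and finally invoking the equivalence $\|\cdot\|_{L^p}\simeq\|\cdot\|_{L^p_\omega}$ on $(0,T)$ with finite $T$ yields the stated Lipschitz estimate for every $p\in[1,\infty]$.

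The main technical obstacle is the $A^{1+\gamma/2}$-estimate on $w$: since $\|AE(t)\|_{L^2\to L^2}$ decays only like $t^{-1}$ and is not integrable at $t=0$, I cannot distribute the full derivative onto the solution operator $E(t-s)$, and must instead exploit the enhanced spatial regularity of both $u_1$ and $w$ supplied by Lemma \ref{thm:sol-reg}. Combining this with the weighted-norm absorption (which replaces the usual Gronwall step, whose constant would degrade in $T$) is the core of the argument and produces the clean $L^p$-to-$L^p$ dependence with the listed problem-data constants.
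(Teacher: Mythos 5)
Your proof is correct and follows essentially the same route as the paper: isolate $\rho_1-\rho_2$ from the equation at $x_0$ using the lower bound $u_1(x_0,t)\ge\underline{c}_u$, control $Aw(x_0,t)$ and $w(x_0,t)$ through the Duhamel formula by commuting $A^{1+\gamma/2}$ onto $u_1$ and $w$ (so that only the integrable kernel $\|E(t)\|\le ct^{\alpha-1}$ is needed), and absorb the resulting Volterra terms in a weighted $L^p_\omega$ norm with $\omega$ large. The only (harmless) deviations are cosmetic: you divide the difference equation by $u_1$ rather than dividing each equation by $u_i$ separately---which makes $f(x_0)$ cancel and avoids the quotient-difference terms $\mathrm{I}_1,\mathrm{I}_3$---and you absorb the trailing $|w(x_0,t)|$ contribution through the $A^{\gamma/2}$ Volterra estimate instead of the paper's final step via the identity $w(t)=\Gamma(\alpha)^{-1}\int_0^t(t-s)^{\alpha-1}\partial_s^\alpha w(s)\,\ds$.
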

\begin{proof}
By Assumption \ref{assum:stab}(ii), $u_i(x_0,t)\ge \underline{c}_u>0$.
According to the equation in \eqref{eqn:fde}, $\rho_i$  can be written as
\begin{equation*}
\begin{aligned}
    \rho_i(t)&=\frac{f(x_0)+\Delta u_i(x_0,t)-\partial_t^{\al}u_i(x_0,t)}{u_i(x_0,t)}\\
    &=\frac{f(x_0 )-A u_i(x_0,t)-\partial_t^{\al}u_i(x_0,t) + c_0 u_i(x_0,t)}{u_i(x_0,t)}
\end{aligned}
\end{equation*}
As a result, we observe
\begin{align*}
    (\rho_1 -\rho_2)(t)=&\left(\frac{f(x_0 )}{u_1(x_0,t)}-\frac{f(x_0 )}{u_2(x_0,t)}\right)+\left( \frac{A u_2}{u_2}- \frac{A u_1}{u_1}\right)(x_0,t)+\left(\frac{\partial_t^{\al}u_2 }{u_2 }-\frac{\partial_t^{\al}u_1 }{u_1 }\right)(x_0,t)\\
    =&\mathrm{I}_1+\mathrm{I}_2+\mathrm{I}_3.
\end{align*}
In the following, we establish bounds for $\mathrm{I}_1$, $\mathrm{I}_2$, $\mathrm{I}_3$ separately. By Assumption \ref{assum:stab}, elliptic regularity estimate
and Sobolev embedding theory \cite[Theorem 7.26]{Gilbarg:1983}, we have that $ |f(x_0)|\le \|A^{\gamma/2} f\|_{L^2\II}\le c$, for $\gamma>\frac{d}2$. 
As a result, We arrive at
\begin{equation*}
    \|\mathrm{I}_1\|_{L_{\omega}^p(0,T)}\le \|f\|_{ D(A^{\gamma/2})} \left\|\frac{u_1-u_2 }{u_1 u_2 } (x_0,\cdot)\right\|_{L^p_{\omega}(0,T)} \le c\|(u_1 -u_2)(x_0,\cdot)\|_{L^p_\omega (0,T)}.
\end{equation*}
For $\mathrm{I}_2$, according to Assumption \ref{assum:stab}, Lemma \ref{thm:sol-reg}, the elliptic regularity and Sobolev embedding
theorem,
we have for $\gamma>\frac{d}2$
$$\|A u_1(x_0,\cdot)\|_{C([0,T])}
\le\|A^{1+\frac\gamma2} u_1 \|_{C([0,T];L^2\II)} \le c.$$ 
Then we conclude   
\begin{equation*}
\begin{split}
    \|\mathrm{I}_2\|_{L^p_\omega(0,T)}
    &\le c\|A(u_1-u_2)(x_0,\cdot)\|_{L^p_\omega (0,T)}.
\end{split}
\end{equation*}
Now we establish an estimate for 
$ \|A(u_1-u_2)(x_0,\cdot)\|_{L^p_\omega (0,T)}$.
Denote $w=u_1-u_2$, $w$ satisfies 
 \begin{equation}\label{eqn:difference_eq}
     \partial_t^\alpha w(t) + A w(t) =  [(\rho_2 -\rho_1 )u_1](t)+[(\rho_2 -c_0)(u_2 -u_1)](t),\quad \forall~t\in(0,T].
 \end{equation}
 and $ w(0)=0 $.
According to \eqref{eqn:sol-rep}, we derive that 
\begin{equation*} 
 \begin{aligned}
     \|A^{1+\frac{\gamma}{2}} (u_1-u_2)(t)\|_{L^2\II}  &\le \int_{0}^{t}|(\rho_2-\rho_1)(s)|\,\| E(t-s )A^{1+\frac{\gamma}{2}}  u_1(s)\|_{L^2\II}\ds\\
     &\quad +\int_{0}^{t}|\rho_2(s)-c_0|\,\| E(t-s )A^{1+\frac{\gamma}{2}}  (u_1-u_2)(s)\|_{L^2\II}\ds \\
     &:=\mathrm{I}_{2,1}(t)+\mathrm{I}_{2,2}(t).
 \end{aligned}
\end{equation*}
Since $\|A^{1+\frac{\gamma}{2}} u_1\|_{C([0,T];L^2\II)}\le c$, the Young's inequality for convolution and Lemma \ref{lem:op} lead to
\begin{align*}
    \int_0^T\left(e^{-\omega t} \mathrm{I}_{2,1}(t) \right)^p \dt=&\int_0^T\left( \int_0^t e^{-\omega t}|(\rho_2-\rho_1)(s)|\,\|   E(t-s) A^{1+\frac{\gamma}{2}} u_1(s)\|_{L^2\II}    \ds \right)^p  \dt\\
    \le & c\int_0^T\left( \int_0^t e^{-\omega (t-s)}(t-s)^{ \alpha-1} e^{-\omega s}|\rho_2(s)-\rho_1(s)|    \ds \right)^p  \dt\\
    \le & c\left(\int_0^T    e^{-\omega t}t^{ \alpha-1}   \dt\right)^p\int_0^T  \left(  e^{-\omega t}  |\rho_2(t)-\rho_1(t)| \right)^p \dt\\
    =&c\left(\int_0^T    e^{-\omega t}t^{ \alpha-1}   \dt\right)^p  \|\rho_1-\rho_2\|_{L^p_\omega(0,T)}^p
    \le c (\Gamma( \alpha) \omega^{- \alpha})^p
    \|\rho_1-\rho_2\|_{L^p_\omega(0,T)}^p.
\end{align*}
Thus we conclude that 
\begin{equation*}
    \|\mathrm{I}_{2,1}\|_{L^p_\omega(0,T)}\le c \omega^{- \alpha} \|\rho_1-\rho_2\|_{L^p_\omega(0,T)}. 
\end{equation*}
Since $\|\rho_2\|_{C(0,T)}\le \bar c_\rho$, the term $\mathrm{I}_{2,2}$ can be estimated via a similar argument. Accordingly, we arrive at 
\begin{equation*}
    \|\mathrm{I}_{2,2}\|_{L^p_\omega (0,T)} \le c \omega^{- \alpha} \|A^{1+\frac{\gamma}{2}}  (u_1-u_2)\|_{L^p_\omega(0,T;L^2\II)}.
\end{equation*}
With the estimations for $\mathrm{I}_{2,1}$ and $\mathrm{I}_{2,2}$, taking $\omega$ sufficiently large, we obtain 
\begin{align*}
    \|A^{1+\frac{\gamma}{2}} (u_1-u_2) \|_{L^p_\omega (0,T;L^2\II)}\le  c \omega^{- \alpha} \|\rho_1-\rho_2\|_{L^p_\omega(0,T)},
\end{align*}
and hence
\begin{align*}
      \|\mathrm{I}_2\|_{L^p_\omega(0,T)}
      \le  c\|A^{1+\frac{\gamma}{2}}(u_1-u_2) \|_{L^p_\omega (0,T;L^2\II)}
      \le   c \omega^{-\alpha} \|\rho_1-\rho_2\|_{L^p_\omega(0,T)}.
\end{align*}
For $\mathrm{I}_3$, since Assumption \ref{assum:stab} holds and {$\| \partial_t^\al  u_1(x_0,\cdot)\|_{C[0,T]}\le c$}, we obtain 
\begin{equation*}
    \|\mathrm{I}_3\|_{L^p_\omega(0,T)}
    \le c(\|\partial_t^\alpha(u_1-u_2)(x_0,\cdot)\|_{L^p_\omega (0,T)} + \|(u_1 -u_2)(x_0,\cdot)\|_{L^p_\omega (0,T)}).
\end{equation*}
 
In conclusion, the estimations for $\mathrm{I}_1$, $\mathrm{I}_2$, $\mathrm{I}_3$ gives that
\begin{align*}
    \|\rho_1-\rho_2\|_{L^p_\omega(0,T) }\le  c\Big(\|\partial_t^\alpha(u_1-u_2)(x_0,\cdot)\|_{L^p_\omega (0,T)} + \|(u_1 -u_2)(x_0,\cdot)\|_{L^p_\omega (0,T)}\Big)+c \omega^{- \alpha} \|\rho_1-\rho_2\|_{L^p_\omega(0,T)} .
\end{align*}
Again, we take $\omega$ sufficiently large and get
\begin{equation*}
\|\rho_1-\rho_2\|_{L^p_\omega(0,T) }\le  
c\Big(\|\partial_t^\alpha(u_1-u_2)(x_0,\cdot)\|_{L^p_\omega (0,T)} + \|(u_1 -u_2)(x_0,\cdot)\|_{L^p_\omega (0,T)} \Big).
\end{equation*}
By the norm equivalence, we obtain
\begin{equation}\label{eqn:stab-00}
\| \rho_1 - \rho_2  \|_{L^p(0,T)} \le 
c\Big( \|\partial_t^\alpha(u_1-u_2)(x_0,\cdot)\|_{L^p  (0,T)} +
\|(u_1 -u_2)(x_0,\cdot)\|_{L^p (0,T)} \Big).
\end{equation}
Moreover, 
by  \cite[Theorem 2.13(ii), p. 45]{Jin:2021}, we can write
$w(t)= w(0) + \Gamma(\alpha)^{-1}\int_0^t(t-s)^{\alpha-1} \partial_s^\alpha w(s) \,\ds$.
We further apply Young's inequality to obtain 
\begin{equation*}
\begin{split}
\|(u_1 -u_2)(x_0,\cdot)\|_{L^p  (0,T)} \le c\,\|\partial_t^\alpha(u_1-u_2)(x_0,\cdot)\|_{L^p  (0,T)}.
\end{split}
\end{equation*}
Together with \eqref{eqn:stab-00}, we obtain the stability result presented in the theorem.
\end{proof}

Theorem \ref{thm:stab} provides a Lipschitz-type stability. We observe that the IPP experiences an $\alpha$th order derivative loss, which directly implies that the ill-posedness of IPP intensifies with an increase in the fractional order 
$\alpha$. The findings herein align with those presented in \cite[Theorem 1.2]{FujishiroKian:2016}. This stability estimate not only ensures the potential for stable numerical reconstruction but also plays a crucial role in devising an iterative reconstruction algorithm.

The Lipschitz-type stability  lays the foundation for the creation of a reconstruction algorithm that can retrieve the potential $\rho(t)$ from the observation $u(x_0,t)$, accompanied by error estimates. We will now introduce a straightforward iterative algorithm and demonstrate its convergence within the $L^p(0,T)$ norm. Here we define the following cut-off function 
\begin{equation*}
P_{\mathcal{B}} [a] = \min(\max(a,0),\overline{c}_\rho).
\end{equation*}
Then for $\rho^\dag \in \mathcal{B}$ and any $\rho\in C[0,T]$, we note 
\begin{equation*}
 | P_{\mathcal{B}} [\rho(t)] - \rho^\dag(t) | 
 \le |  \rho(t) - \rho^\dag(t)  |,\quad \forall \, t\in[0,T].
\end{equation*}

\begin{proposition}\label{prop:iter}
Let $\rho^{\dag} \in \mathcal{B}$ and that $u_0$, $g$, and $f$ adhere to Assumption \ref{assum:stab}. Let $u^{\dag} = u(\rho^{\dag})$ be the solution to \eqref{eqn:fde} associated with the potential $\rho^\dag$. Take $x_0$ as any fixed point in the domain $\Omega$ and define $g(t) = u(x_0, t; \rho^{\dag})$ as the measurement data at that point. Starting with any initial guess $\rho_0$ from the admissible set $\mathcal{B}$, we then proceed with the following iterative scheme
\begin{equation}\label{eqn:iter}
    \rho_{k+1}(t) = P_{\mathcal{B}}\Big[\frac{ f(x_0)+\Delta u(x_0,t;\rho_{k}) - \partial^{\alpha}_t  g(t)    }{g(t) }\Big],\quad \forall \, t\in[0,T].
\end{equation}
Then for sufficiently large $\omega$, the sequence of functions $\{\rho_k\}_{k=0}^\infty$ converges to $\rho^\dag$ in $L_\omega^p(0,T)$ and there holds
\begin{equation}\label{eqn:conv}
  \| \rho^\dag - \rho_{k}   \|_{L_\omega^p(0,T)} \le  (c\omega^{- \alpha})^{k} \| \rho^\dag  - \rho_0 \|_{L_\omega^p(0,T)}, \quad k=1,2,\ldots.
\end{equation}
Here  $c$  relies on $\alpha$,  $\overline{c}_\rho$ and $\|A^{1+\frac\gamma2} u^{\dag}\|_{C(0,T;L^2\II)}$, and $\underline{c}_u$, $\overline{c}_u$ given in Assumption \ref{assum:stab}(ii). 
\end{proposition}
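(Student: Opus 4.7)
The starting point is to rewrite $\rho^\dag$ using the equation at $x_0$: since $g(t) = u^\dag(x_0,t) \ge \underline{c}_u > 0$, evaluating \eqref{eqn:fde} at $x_0$ yields
\[
\rho^\dag(t) \;=\; \frac{f(x_0) + \Delta u^\dag(x_0,t) - \partial_t^\alpha g(t)}{g(t)},
\]
and since $\rho^\dag \in \mathcal{B}$, this equals $P_{\mathcal{B}}$ applied to the right-hand side. Subtracting from the definition of $\rho_{k+1}$ and using that $P_\mathcal{B}$ is $1$-Lipschitz, together with $g(t)\ge \underline{c}_u$, gives the pointwise bound
\[
|\rho^\dag(t)-\rho_{k+1}(t)| \;\le\; \underline{c}_u^{-1}\,|\Delta w_k(x_0,t)|, \qquad w_k := u^\dag - u(\rho_k).
\]

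To control the pointwise value $\Delta w_k(x_0,t)$, I would follow the same route as in the proof of Theorem \ref{thm:stab}: Sobolev embedding (valid since $\gamma > d/2$) gives $\|v\|_{C(\overline\Omega)} \le c\|A^{\gamma/2}v\|_{L^2\II}$, and applying this to $v = \Delta w_k$ together with $\Delta = -A + c_0 \mathrm{I}$ yields
\[
|\Delta w_k(x_0,t)| \;\le\; c\,\|A^{1+\gamma/2}w_k(t)\|_{L^2\II}.
\]
The task thus reduces to estimating $\|A^{1+\gamma/2}w_k\|_{L^p_\omega(0,T;L^2\II)}$ in terms of $\|\rho^\dag-\rho_k\|_{L^p_\omega(0,T)}$.

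The main step is the integral-equation bound for $w_k$. Subtracting the equations for $u^\dag$ and $u(\rho_k)$ and using that both have the same initial data and source, one gets
\[
\partial_t^\alpha w_k + A w_k \;=\; (\rho_k-\rho^\dag)\,u^\dag + (c_0 - \rho_k)\,w_k,
\]
so by Duhamel and the fact that the scalar factors commute with $A^{1+\gamma/2}$,
\[
A^{1+\gamma/2} w_k(t) = \int_0^t E(t-s)\bigl[(\rho_k-\rho^\dag)(s)\,A^{1+\gamma/2} u^\dag(s) + (c_0-\rho_k(s))\,A^{1+\gamma/2} w_k(s)\bigr]\,\mathrm{d} s.
\]
Applying Lemma \ref{lem:op} at $s=0$ (so $\|E(t-s)v\|_{L^2\II}\le c(t-s)^{\alpha-1}\|v\|_{L^2\II}$), using the hypothesis $\|A^{1+\gamma/2}u^\dag\|_{C([0,T];L^2\II)}\le c$ and the uniform bound $|\rho_k|\le \bar c_\rho$, I would then apply Young's inequality for convolutions in the weighted $L^p_\omega$-norm exactly as in the stability proof: since $\int_0^T e^{-\omega t}t^{\alpha-1}\,\mathrm{d} t \le \Gamma(\alpha)\omega^{-\alpha}$, this produces
\[
\|A^{1+\gamma/2}w_k\|_{L^p_\omega(0,T;L^2\II)} \;\le\; c\omega^{-\alpha}\|\rho^\dag-\rho_k\|_{L^p_\omega(0,T)} + c\omega^{-\alpha}\|A^{1+\gamma/2}w_k\|_{L^p_\omega(0,T;L^2\II)}.
\]
Taking $\omega$ large enough to absorb the second term on the right yields $\|A^{1+\gamma/2}w_k\|_{L^p_\omega(0,T;L^2\II)} \le c\omega^{-\alpha}\|\rho^\dag-\rho_k\|_{L^p_\omega(0,T)}$, and combining with the first two displays gives the contraction
\[
\|\rho^\dag-\rho_{k+1}\|_{L^p_\omega(0,T)} \;\le\; c\omega^{-\alpha}\,\|\rho^\dag-\rho_k\|_{L^p_\omega(0,T)}.
\]
A simple induction then yields \eqref{eqn:conv}, and for $\omega$ so large that $c\omega^{-\alpha}<1$, convergence in $L^p_\omega(0,T)$ (equivalently $L^p(0,T)$) follows. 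The main obstacle is the same one encountered in Theorem \ref{thm:stab}: the pointwise term $\Delta u(x_0,\cdot)$ forces working in the high-regularity space $D(A^{1+\gamma/2})$, and one must choose $\omega$ large enough to simultaneously absorb the self-referential $w_k$-term in the Duhamel bound and obtain a genuine contraction factor.
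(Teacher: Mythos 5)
Your proposal is correct and follows essentially the same route as the paper: the fixed-point identity for $\rho^\dag$, the $1$-Lipschitz cut-off, Sobolev embedding to reduce to $\|A^{1+\gamma/2}w_k\|_{L^2(\Omega)}$, and the Duhamel--Young absorption argument in the weighted norm (which the paper simply cites as ``the argument in Theorem \ref{thm:stab}''). No gaps.
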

\begin{proof}
First, we define $M:\mathcal{B}\rightarrow \mathcal{B}$ s.t. for any $\rho\in \mathcal{B}$,
    \begin{equation*}
        M \rho = P_{\mathcal{B}}\Big[\frac{ f(x_0)+\Delta u(x_0,t;\rho ) - \partial^{\alpha}_t  g(t)    }{g(t) }\Big].
    \end{equation*}
By Lemma \ref{thm:sol-reg}, we conclude $M \rho \in \mathcal{B}$. 
Note that $\rho^{\dag}$ is a fixed point of the operator  $M$. Then we have 
\begin{align*}
|\rho_{k+1}(t)-\rho^{\dag}(t)|=&\left|P_{\mathcal{B}}\Big[\frac{ f(x_0)+\Delta u(x_0,t;\rho_k ) - \partial^{\alpha}_t  g(t)    }{g(t) }\Big] - \rho^{\dag}  \right|\\
\le & \left|  \frac{ f(x_0)+\Delta u(x_0,t;\rho_k ) - \partial^{\alpha}_t  g(t)    }{g(t) }  - \rho^{\dag}  \right|\\
= & \left|  \frac{ A u(x_0,t;\rho_k ) -   A u(x_0,t;\rho^\dag )    }{g(t) }   - c_0 \frac{   u(x_0,t;\rho_k ) -    u(x_0,t;\rho^\dag )    }{g(t) }   \right|,
\end{align*}
where in the second line, we use the stability for any 
$\rho \in \mathcal{B}$ and $ t\in [0,T]$
$$|P_{\mathcal{B}}[\rho(t)]-\rho^{\dag}(t) |\le | \rho(t)-\rho^{\dag}(t) |. $$
Recall that $g(t)=u(x_0,t;\rho^{\dag})$ satisfying Assumption \ref{assum:stab} (ii). This together with the elliptic regularity and Sobolev embedding implies that
\begin{equation*}
|\rho^{\dag}(t)- \rho_{k+1}(t)|\le c \| A^{1+\frac{\gamma}{2}}  ( u(t;\rho^\dag )  - u(t;\rho_k )  )  \|_{L^2\II}
\end{equation*}
for $\gamma>\frac{d}{2}$.
The argument in Theorem \ref{thm:stab} further leads to
\begin{equation*}
\| A^{1+\frac{\gamma}{2}} ( u(t;\rho^\dag ) - u(t;\rho_k )  )  \|_{L^p_{\omega}(0,T;L^2\II) }\le c\omega^{-\alpha}\|\rho_k-\rho^{\dag}\|_{L^p_\omega (0,T)}.
\end{equation*}
Then we conclude 
\begin{equation*}
 \|\rho_{k+1}-\rho^{\dag}\|_{L^p_\omega(0,T)}
 \le c\omega^{- \alpha} \|\rho_k-\rho^{\dag}\|_{L^p_\omega (0,T)},
\end{equation*}
Finally, choosing $\omega$ large enough, the sequence 
$\{\rho^k\}_{k=0}^{\infty}$ will converge to $\rho^{\dag}$ as in \eqref{eqn:conv}.
\end{proof}
\red{\begin{remark}\label{rmk:Dirichlet_case}
    The above stability estimate Theorem \ref{thm:stab} and the reconstruction scheme in Proposition \ref{prop:iter} can be directly extended to the problem with Dirichlet boundary condition. Consider the initial-boundary value problem:
    \begin{equation*}
        \left\{\begin{aligned}
            \partial_t^\alpha u  -  \Delta u  + \rho(t)u &=f , &&\mbox{in } \Omega\times{(0,T]},\\
             u &=0,&&\mbox{on } \partial\Omega\times{(0,T]},\\
            u(0)&=u_0 ,&&\mbox{in }\Omega.
        \end{aligned}\right.
    \end{equation*}
    The inverse problem aims to reconstruct the potential function $\rho(t)$ from the single point measurement $u(x_0,t)$ with $t\in [0,T]$, where $x_0$ is a point in the interior of $\Omega$. Under the Assumption \ref{assum:stab}, one can show that the inverse problem achieves the Lipschitz stability. The unknown potential $\rho(t)$ could be reconstructed by the fixed point iteration formula (\ref{eqn:iter}). We note that in the Dirichlet boundary condition case, the point measurement should be taken for $x_0$ in the interior of the domain, while for the Neumann boundary condition case, the point measurement could be taken at $x_0$ on $\partial\Omega$.
\end{remark}}

\section{Reconstruction algorithm and error analysis} \label{sec:error_estimate}
Recall the iteration \eqref{eqn:iter} in Proposition \ref{prop:iter} gives a approach to reconstruct the unknown potential. In practice, one needs to discretize the forward problem \eqref{eqn:fde} and the iteration scheme \eqref{eqn:iter}. To derive the error estimate, throughout this section, we view the operator $A=-\Delta+c_0 $ as an operator in $L^\infty\II$, with domain 
\begin{equation*}
    D_{\infty}(A)=\{v\in C(\overline{\Omega}), \, \partial_\nu v=0 \mbox{ on }\partial\Omega, \mbox{ and }-\Delta u+c_0u\in C(\overline{\Omega}) \}.
\end{equation*}
Then there holds the maximum norm resolvent estimate (\cite[Theorem 1]{Stewart:1980} and \cite[Appendix A]{LiMa:sinum2022})
\begin{equation}\label{eqn:Linfty_resolvent}
    \|(z+A)^{-1}v\|_{L^\infty\II}\le c \|v\|_{L^\infty\II}.
\end{equation}
This yields the following smoothing property of the solution operators $E(t)$ and $F(t)$ in $L^\infty\II$-norm. The proof follows from the standard argument by Laplace transform and resolvent estimate \eqref{eqn:Linfty_resolvent}. See e.g., \cite[Theorem 6.4]{Jin:2021}.
\begin{lemma}\label{lem:op_Linfty}
The following estimate holds for any $s\in[0,1]$,
\begin{equation*}
    t^{s\alpha}\|A^{s}F(t)v\|_{L^\infty\II}+t^{1-(1-s)\alpha}\|A^{s}E(t)v\|_{L^\infty\II}\le c\|v\|_{L^\infty\II},\quad \text{for all }t >0,
\end{equation*}
where the positive constant $c$ is independent of $t$.
\end{lemma}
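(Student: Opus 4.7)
\medskip
\noindent\textbf{Proof proposal.} The plan is to mirror the Hilbert-space proof of Lemma \ref{lem:op} by inserting the maximum-norm resolvent bound \eqref{eqn:Linfty_resolvent} in place of the $L^2$ resolvent estimate \eqref{eqn:resol}, and then bound the Dunford-type integrals defining $F(t)$ and $E(t)$ in \eqref{eqn:sol_op}. First I would treat the endpoint cases $s=0$ and $s=1$. For $s=0$, the bound $\|(z^\alpha+A)^{-1}v\|_{L^\infty\II}\le c|z|^{-\alpha}\|v\|_{L^\infty\II}$ follows from \eqref{eqn:Linfty_resolvent} applied with $z^\alpha\in\Sigma_\theta$ for $z\in\Gamma_{\theta,\kappa}$. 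For $s=1$, use the algebraic identity $A(z^\alpha+A)^{-1}=I-z^\alpha(z^\alpha+A)^{-1}$ to obtain $\|A(z^\alpha+A)^{-1}\|_{L^\infty\II\to L^\infty\II}\le c$.

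Next, for $s\in(0,1)$ I would establish the interpolated resolvent estimate
\begin{equation*}
    \|A^s(z^\alpha+A)^{-1}v\|_{L^\infty\II}\le c|z|^{-\alpha(1-s)}\|v\|_{L^\infty\II}, \qquad z\in\Gamma_{\theta,\kappa}.
\end{equation*}
Since $-A$ generates a bounded analytic semigroup on $L^\infty\II$ by \eqref{eqn:Linfty_resolvent}, the fractional power $A^s$ is well-defined on a suitable domain, and the moment (interpolation) inequality $\|A^s w\|_{L^\infty\II}\le c\|Aw\|^{s}_{L^\infty\II}\|w\|^{1-s}_{L^\infty\II}$ holds for $w\in D_\infty(A)$; applying it to $w=(z^\alpha+A)^{-1}v$ and combining with the two endpoint bounds gives the displayed estimate.

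With this in hand, the standard contour argument finishes the job. Parametrize $\Gamma_{\theta,\kappa}$ with $\kappa=1/t$, split into the circular arc $|z|=1/t$ and the two rays $z=re^{\pm i\theta}$, $r\ge 1/t$, and bound
\begin{equation*}
\|A^s F(t)v\|_{L^\infty\II}\le c\|v\|_{L^\infty\II}\int_{\Gamma_{\theta,\kappa}}|z|^{\alpha s-1}|e^{zt}|\,|\dz|, \quad \|A^s E(t)v\|_{L^\infty\II}\le c\|v\|_{L^\infty\II}\int_{\Gamma_{\theta,\kappa}}|z|^{-\alpha(1-s)}|e^{zt}|\,|\dz|.
\end{equation*}
On the arc, $|e^{zt}|\le e$ and the length is $\mathcal{O}(t^{-1})$; on the rays, $\Re(zt)=rt\cos\theta<0$ produces exponential decay and the substitution $u=rt$ turns the integral into a Gamma-type constant. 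The two contributions for $F$ combine to $ct^{-\alpha s}$ and those for $E$ combine to $ct^{\alpha(1-s)-1}$, which is the claim.

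The only genuinely new ingredient compared with Lemma \ref{lem:op} is the fractional-power resolvent bound in $L^\infty\II$, which I expect to be the main point that needs care because classical interpolation on $L^\infty$ spaces is delicate; I would rely on the moment inequality for sectorial operators (which is available from the bounded analytic semigroup structure guaranteed by \eqref{eqn:Linfty_resolvent}) rather than Riesz--Thorin. Once that step is in place, the contour estimates are routine and follow the template of \cite[Theorem 6.4]{Jin:2021}.
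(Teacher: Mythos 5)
Your proposal is correct and follows essentially the same route the paper indicates: the contour (Laplace transform) argument of \cite[Theorem 6.4]{Jin:2021} with the maximum-norm resolvent estimate \eqref{eqn:Linfty_resolvent} replacing the $L^2$ one, plus an interpolation step for intermediate $s$. Your choice of the moment inequality for fractional powers of a sectorial operator, rather than Riesz--Thorin, is exactly the right way to handle the interpolation in the $L^\infty\II$ setting, and the endpoint and contour computations are as in the paper's cited template.
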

 
For the numerical estimate, we propose following higher regularity assumption.
\begin{assumption}\label{assum:err}
We assume following conditions hold
\begin{itemize}
    \item [(i)] $\rho \in  \mathcal{B}$ and is  piecewise $C^1 $ , $u_0 , Au_0 \in D_{\infty}(A)$, $f\in D_{\infty}(A)$. 
    \item [(ii)]  For any $x_0\in \overline\Omega$, there exist positive constant $\underline{c}_u$ and $\overline{c}_u$ such that  $\underline{c}_u\le u(x_0,t)\le \overline{c}_u$ for all $t\in [0,T]$.
\end{itemize}
\end{assumption}
Under Assumption \ref{assum:err}(i), the same argument for Lemma \ref{thm:sol-reg} yield that  problem \eqref{eqn:fde} uniquely determines a solution $u\in C^\al([0,T];D_{\infty}(A))$ and $Au\in C([0,T];D_{\infty}(A))$. Moreover, for a point $x_0\in \overline\Omega$, $ u(x_0,t)\in C^\al[0,T]\cap C^2(0,T] $, and there holds \cite[Theorem 2.1]{WangZhou:2020}
\begin{equation}\label{eqn:al-deriv}
\|\partial_t^{\ell}(\partial_t^{\al}u( t))\|_{L^\infty(\Omega)} +
\|\partial_t^{\ell}(Au( t))\|_{L^\infty(\Omega)}\le c t^{-\ell}     \quad \text{for}~~ \ell=0,1,
\end{equation}
and 
\begin{equation}\label{eqn:deriv}
\|\partial_t^\ell u( t)\|_{L^\infty(\Omega)}\le c t^{\al-\ell} \quad \text{for}~~ \ell=1,2.
\end{equation}

\subsection{Fully discrete scheme for the direct problem}\label{sec:fully}
Next we present fully discrete scheme for solving the forward problem. For time discretization, we divide the interval $[0,T]$ into $N$ uniformly subintervals with step size $\tau=T/N$ and set the time grids $\{t_n=n\tau\}_{n=0}^N$. We employ convolution quadrature generated by backward Euler scheme  (BECQ) \cite[Chapter 3]{JinZhou:2023book} to approximate the fractional derivative $\partial_t^{\al}v(t_n)$ (with $v^j=v(t_j)$):
\begin{equation*}
    \overline{\partial}_{\tau}^{\alpha} v^n=\tau^{-\alpha}\sum_{j=0}^{n} \omega_j^{(\alpha)}(v^{n-j}-v^0),\qquad \text{with } (1-\xi)^{\alpha}=\sum_{j=0}^{n} \omega_j^{(\alpha)}.
\end{equation*}

For spatial discretization, we apply the Galerkin finite element method, following \cite[Chapter 2]{JinZhou:2023book}. Let $\mathcal{T}_h$ be a quasi-uniform simplicial triangulation of domain $\Omega$ with mesh size $h$. Over  $\mathcal{T}_h$, we let $V_h\subset H^1\II$ be the conforming piecewise linear finite element space.
On the FEM space $V_h$, define the orthogonal projection on $L^2\II$, $P_h:L^2\II\rightarrow V_h$ such that
\begin{equation*}
    (P_hv,\phi_h)=(v,\phi_h),\quad \forall\, v\in L^2\II,\, \phi_h\in V_h,
\end{equation*}
and 
Ritz-projection $R_h:H^1\II\rightarrow V_h$ by 
\begin{equation*}
    (\nabla R_hv,\nabla\phi_h)=(\nabla v,\nabla \phi_h)\quad \text{and} \quad \int_{\Omega}R_h v\dx =\int_\Omega v \dx,\quad \forall v\in H^1\II,\, \phi_h\in V_h.
\end{equation*}
Then for $1\le p\le \infty$, $s=0,1,2$ and $k=0,1$  with $k\le s$, the  following estimates holds for $ L^2\II$-projection \cite{Bakaev:2001,Crouzeix:1987}
\begin{equation}\label{eqn:error_l2}
    \|v-P_hv\|_{W^{k,p}\II} \le ch^{s-k}\|v\|_{W^{s,p}\II} \quad \forall\, v\in W^{s,p}\II
\end{equation}
and Ritz projection \cite[eq. (1.45)]{Thomee:2006}
\begin{equation}\label{eqn:error_ritz}
    \|v-R_hv\|_{L^\infty\II} \le ch^s  |\log h|\|v\|_{W^{s,\infty}\II} \quad \forall \, v\in W^{s,\infty}\II.
\end{equation}  
We define the discrete Laplacian operator $ \Delta_h:V_h\rightarrow V_h$ such that
\begin{equation*}
    (-\Delta_h v_h,\phi_h)=(\nabla v_h,\nabla \phi_h) ,\quad \forall v_h,\phi_h\in V_h,
\end{equation*}
and define $A_h=-\Delta_h+c_0$.  Then the following discrete $L^{\infty}\II$ resolvent estimate holds \cite[Theorem 1.1]{CrouzeixLarssonThomee:1994} and \cite[Theorem 1.1]{LiSun:2017}
\begin{equation}\label{eqn:dis_Linfty_resolvent}
     \|(z+A_h)^{-1}v_h\|_{L^\infty\II }\le c|z|^{-1}\|v_h\|_{L^\infty\II}\quad \forall v_h\in V_h\mbox{, }z\in \Sigma_{\theta },\, \theta\in ( {\pi}/{2},\pi).
 \end{equation}

We write the numerical scheme for approximately solving \eqref{eqn:fde}:   find $u_h^n := u_h^n(\rho)\in V_h$ for $n=1,\dots,N$, such that
\begin{equation}\label{eqn:fully-FP}
    (\dtau u_h^n ,\phi_h)+(\nabla u_h^n,\nabla \phi_h)+\rho(t_n)(u_h^n ,\phi_h)=(f,\phi_h).
\end{equation}
with $u_h^0=R_h u_0$. 
Then the scheme \eqref{eqn:fully-FP} could be written as
\begin{equation}\label{eqn:fully-FP-op}
    \dtau u_h^n  + A_h u_h^n = (c_0-\rho(t_n))u_h^n +P_h f.
\end{equation}
Use discrete Laplace transform, we obtain the following  representation of $u_h^n $ 
\begin{equation}\label{eqn:sol-rep-fully}
    u_h^n =F_{h,\tau}^n R_h u_0+ \tau \sum_{j=1}^{n} E_{h,\tau}^{n-j} (P_h f-\rho(t_j)u_h^j(\rho)+c_0 u_h^j(\rho) ),
\end{equation}
where the discrete solution operator $F_{h,\tau}^n $ and $E_{h,\tau}^{n} $ are defined respectively by  \cite[Section 3.2]{JinZhou:2023book}
\begin{equation}\label{eqn:sol_op_fd}
    \begin{aligned}
            F_{h,\tau}^n&=\frac{1}{2\pi {\rm i}}\int_{\Gamma_{\theta,\kappa }^{\tau}}\delta_\tau(e^{-z\tau})^{\al-1}   e^{-z\tau }  e^{zt_n}(\delta_\tau(e^{-z\tau})^\alpha+A_h)^{-1}\, \dz,\\
            E_{h,\tau}^n&=\frac{1}{2\pi {\rm i}}\int_{\Gamma_{\theta,\kappa }^{\tau}}e^{zt_n}   (\delta_\tau(e^{-z\tau})^\alpha+A_h)^{-1}\, \dz,
    \end{aligned}
\end{equation}
 with the kernel function $\delta_\tau(\xi)=\frac{1-\xi}{\tau}$ and the contour $\Gamma_{\theta,\kappa}^{\tau}:=\{z\in\Gamma_{\theta,\kappa}: |\Im(z)|\le \pi/\tau \}$ with $\theta\in (\frac{\pi}{2},\pi)$ close to $\pi/2$ (oriented counterclockwise). Similar as in continuous case, the following smoothing property of the  discrete solution operators $F_{h,\tau}^n $ and $E_{h,\tau}^n$ holds valid 
(cf. \cite[Lemma 3.1]{JinZhou:2023book})
 \begin{equation}\label{eqn:dis_sm_prop}
     t_n^{s\alpha}\|A_h^{s}F_{h,\tau}^n v_h\|_{L^\infty\II}+t_{n+1} ^{1-(1-s)\alpha}\|A_h^{s}E_{h,\tau}^n v_h\|_{L^\infty\II}\le c\|v_h\|_{L^\infty\II} 
     ~\forall v_h\in V_h, ~s\in[0,1].
 \end{equation}
 
In the following, we aim to analyze the  scheme \eqref{eqn:fully-FP}. First of all, we present \textsl{a priori} bounds for the numerical solution $u_h^n$.
\begin{lemma}\label{lem:uhn-apriori}
Assume that $\rho\in \mathcal{B}$, $u_0,Au_0\in D_{\infty}(A)$, and $f \in  D_{\infty}(A)$. Let $u_h^n$ solves the fully discrete problem \eqref{eqn:fully-FP}. Then
there holds for all $s\in[0,1]$
\begin{equation*}
   \max_{1\le n\le N} \| (A_h)^s u_h^n \|_{L^\infty\II} \le C.
\end{equation*} 
\end{lemma}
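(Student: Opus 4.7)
The plan is to start from the solution representation \eqref{eqn:sol-rep-fully} and convert it into a discrete analog of \eqref{eqn:sol-rep-1}, so that the time-independent source is absorbed into the initial data. To do this I would first verify the identity
\begin{equation*}
\tau\sum_{j=1}^{n} E_{h,\tau}^{n-j} v \;=\; A_h^{-1}\bigl(I - F_{h,\tau}^n\bigr) v \qquad \forall \, v \in V_h,
\end{equation*}
obtained by observing that $w_h^n := \tau\sum_{j=1}^n E_{h,\tau}^{n-j}v$ solves $\dtau w_h^n + A_h w_h^n = v$ with $w_h^0 = 0$, while $A_h^{-1}v - w_h^n$ satisfies the homogeneous equation with initial data $A_h^{-1}v$ (using that $\dtau$ annihilates constant-in-$n$ sequences). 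Applied to $v = P_h f$, this yields
\begin{equation*}
u_h^n = F_{h,\tau}^n \bigl(R_h u_0 - A_h^{-1} P_h f\bigr) + A_h^{-1} P_h f - \tau\sum_{j=1}^n E_{h,\tau}^{n-j}(\rho(t_j) - c_0) u_h^j.
\end{equation*}

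Next I would apply $A_h^s$ and measure in $L^\infty(\Omega)$. By the smoothing estimate \eqref{eqn:dis_sm_prop}, the first two terms are uniformly bounded: $\|F_{h,\tau}^n A_h^s(R_h u_0 - A_h^{-1}P_h f)\|_{L^\infty(\Omega)} \le c\|A_h^s(R_h u_0 - A_h^{-1}P_h f)\|_{L^\infty(\Omega)}$, and by interpolation this reduces to controlling $\|R_h u_0\|_{L^\infty(\Omega)}$, $\|A_h R_h u_0\|_{L^\infty(\Omega)}$ and $\|P_h f\|_{L^\infty(\Omega)}$, each bounded under Assumption \ref{assum:err}(i) via the $L^\infty$-stability of $R_h$ and $P_h$ together with the identity $A_h R_h v = P_h(-\Delta v) + c_0 R_h v$ valid under the Neumann boundary condition. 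For the convolution term, the smoothing estimate gives
\begin{equation*}
\Bigl\| A_h^s \tau\sum_{j=1}^n E_{h,\tau}^{n-j} (\rho(t_j) - c_0) u_h^j \Bigr\|_{L^\infty(\Omega)}
\le c \,\tau \sum_{j=1}^n t_{n-j+1}^{(1-s)\alpha - 1} \|u_h^j\|_{L^\infty(\Omega)}.
\end{equation*}

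Setting $s = 0$ first, the inequality reads $\|u_h^n\|_{L^\infty(\Omega)} \le C + c\tau\sum_{j=1}^n t_{n-j+1}^{\alpha-1}\|u_h^j\|_{L^\infty(\Omega)}$, and a discrete fractional Gr\"onwall inequality delivers $\max_{1 \le n \le N}\|u_h^n\|_{L^\infty(\Omega)} \le C$. Substituting this uniform bound back, for any $s \in [0,1)$ the right-hand side becomes $C + c\tau\sum_{j=1}^n t_{n-j+1}^{(1-s)\alpha-1} \le cT^{(1-s)\alpha}/((1-s)\alpha) \le C$. The main obstacle is the endpoint $s=1$, where the kernel $t_{n-j+1}^{-1}$ loses integrability and the naive estimate diverges logarithmically in $\tau$. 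I would handle this case by splitting
\begin{equation*}
\tau\sum_{j=1}^n E_{h,\tau}^{n-j}(\rho(t_j) - c_0)u_h^j = A_h^{-1}(I - F_{h,\tau}^n)(\rho(t_n) - c_0)u_h^n + \tau\sum_{j=1}^n E_{h,\tau}^{n-j}\bigl[(\rho(t_j)-c_0)u_h^j - (\rho(t_n)-c_0)u_h^n\bigr],
\end{equation*}
applying $A_h$, and exploiting a discrete time-H\"older estimate on $u_h^n$ (derived from the $s<1$ bound together with the scheme) so that the remaining integrand offsets $t_{n-j+1}^{-1}$. Alternatively, the identity $A_h u_h^n = -\dtau u_h^n + (c_0 - \rho(t_n)) u_h^n + P_h f$ reduces the endpoint to bounding $\|\dtau u_h^n\|_{L^\infty(\Omega)}$, mirroring the continuous estimate \eqref{eqn:al-deriv}.
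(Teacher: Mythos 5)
Your reduction to the discrete analogue of \eqref{eqn:sol-rep-1}, the treatment of the initial-data/source terms, and the $s=0$ Gr\"onwall step are all sound, and for $s\in[0,1)$ your kernel $t_{n-j+1}^{(1-s)\alpha-1}$ is integrable so the argument closes. The genuine gap is exactly the endpoint $s=1$ that you flag, and neither of your two workarounds closes it under the stated hypotheses. The splitting trick requires a uniform time-H\"older modulus for $(\rho(t_j)-c_0)u_h^j$, but the lemma only assumes $\rho\in\mathcal{B}$, i.e.\ $\rho$ continuous with $0\le\rho\le\overline{c}_\rho$ and no quantitative modulus; worse, this lemma is later invoked in Theorem \ref{thm:recon} for arbitrary $\rho\in\mathcal{B}_N$ (just a bounded sequence of nodal values), so the constant $C$ cannot be allowed to depend on any continuity modulus of $\rho$. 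The alternative route via $A_h u_h^n=-\dtau u_h^n+(c_0-\rho(t_n))u_h^n+P_hf$ is circular: writing $\dtau u_h^n$ through the same representation gives $\dtau F_{h,\tau}^nR_hu_0=-A_hF_{h,\tau}^nR_hu_0$ and $\dtau\bigl(\tau\sum_j E_{h,\tau}^{n-j}g^j\bigr)=g^n-A_h\tau\sum_j E_{h,\tau}^{n-j}g^j$, so bounding $\dtau u_h^n$ is the same problem as bounding $A_h$ applied to the convolution term.

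The missing idea is simple: do not move $A_h^s$ onto the solution operator at all. Since $A_h$ commutes with $E_{h,\tau}^{n-j}$ and with the scalar factors, write
\begin{equation*}
A_h\,\tau\sum_{j=1}^{n}E_{h,\tau}^{n-j}(c_0-\rho(t_j))u_h^j
=\tau\sum_{j=1}^{n}E_{h,\tau}^{n-j}(c_0-\rho(t_j))\,A_h u_h^j ,
\end{equation*}
and apply the smoothing estimate \eqref{eqn:dis_sm_prop} with $s=0$ to $E_{h,\tau}^{n-j}$, which yields the integrable kernel $t_{n-j+1}^{\alpha-1}$ multiplying $\|A_hu_h^j\|_{L^\infty\II}$. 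The discrete Gr\"onwall inequality then runs directly on the quantity $\|A_hu_h^n\|_{L^\infty\II}$, giving the $s=1$ case with a constant depending only on $\overline{c}_\rho$, $\|Au_0\|_{L^\infty\II}$ and $\|f\|_{L^\infty\II}$; the case $s=0$ is your argument and the intermediate $s$ follow by interpolation. This is precisely the route the paper takes.
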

\begin{proof}
Using the solution representation \eqref{eqn:sol-rep-fully}, we have
\begin{equation}\label{eqn:sol-rep-42}
\begin{split}
    A_h u_h^n &= A_h F_{h,\tau}^n R_h u_0+ \tau \sum_{j=1}^{n} E_{h,\tau}^{n-j}  A_h  P_h f + \tau \sum_{j=1}^{n} E_{h,\tau}^{n-j}    (c_0  - \rho(t_j))A_h u_h^j.
\end{split}
\end{equation} 
\red{Then we apply the identity $ A_h R_h=P_h A+c_0(R_h-P_h)$ and use the smoothing property \eqref{eqn:dis_sm_prop} together with the approximation properties \eqref{eqn:error_l2}-\eqref{eqn:error_ritz} to obtain} 
\begin{equation*}
\| A_h F_{h,\tau}^n R_h u_0 \|_{L^\infty\II} \le c \| A u_0 \|_{L^\infty\II}.
\end{equation*} 
For the second term in \eqref{eqn:sol-rep-42}, we apply the equality that $I- F_{h,\tau}^n = \tau \sum_{j=1}^{n} E_{h,\tau}^{n-j} A_h$ and 
\red{the smoothing property} \eqref{eqn:dis_sm_prop} to obtain
\begin{equation*}
\Big\| \tau \sum_{j=1}^{n} E_{h,\tau}^{n-j}  A_h  P_h f \Big\|_{L^\infty\II} 
\le c \| (I- F_{h,\tau}^n) P_h f \|_{L^\infty\II} \le c  \|  P_h f \|_{L^\infty\II} \le c  \|   f \|_{L^\infty\II}.
\end{equation*} 
For the third term in  \eqref{eqn:sol-rep-42}, we \red{apply the smoothing property} \eqref{eqn:dis_sm_prop} to derive
\begin{equation*}
\Big\| \tau \sum_{j=1}^{n} E_{h,\tau}^{n-j}   
(c_0  - \rho(t_j))A_h u_h^j(q) \|_{L^\infty\II}
\le \tau \sum_{j=1}^{n}  t_{n-j+1}^{\alpha-1}  
\| A_h u_h^j  \|_{L^\infty\II}.
\end{equation*} 
To sum up, we arrive at
\begin{equation*}
 \| A_h u_h^n \|_{L^\infty\II} \le c  (\| A u_0 \|_{L^\infty\II} +\| f \|_{L^\infty\II}) + \tau \sum_{j=1}^{n}  t_{n-j+1}^{\alpha-1}  
\| A_h u_h^j  \|_{L^\infty\II}.
\end{equation*} 
Then the desired result for $s=1$ can be directly derived from \red{Gr\"onwall's inequality \cite[Theorem 10.2]{JinZhou:2023book}}. The estimate for $s=0$ could be obtained analogously and the intermediate cases can be proved by the interpolation technique.
\end{proof}

Now we introduce the problem with time-independent coefficient: given $v(0)=v_0$, find $v(t)\in \text{D}_\infty(A) $ such that 
\begin{equation}\label{eqn:pde_q0}
     \partial_t^\al v(t) + Av(t)  = f(t),\quad \forall ~ t\in (0,T].
\end{equation}
The fully discrete scheme of \eqref{eqn:pde_q0} reads: for $n=1,2,\dots,N$, we look for $v_h^n
\in V_h$ satisfying
\begin{equation}\label{eqn:pde_q0_fd}
 \dtau v_h^n  + A_h v_h^n = P_h f^n  \quad \forall t\in(0,T],\quad \text{with } v_h^0=R_h v_0.
\end{equation}

The error analysis for the numerical scheme \eqref{eqn:pde_q0_fd} has been provided in \cite[Theorem 4.5]{WangZhou:2020}. In the next lemma, we establish an bound for $\|Av(t_n)- A_h v_h^n\|_{L^\infty(\Omega)}$.
\begin{lemma}\label{lem:pde_q0_error}
Suppose that $v_0,Av_0\in D_{\infty}(A)$, and $f \in C(0,T;D_{\infty}(A))\cap W^{1,1}(0,T;D_{\infty}(A))$.
Let $v$ \red{solve problems} \eqref{eqn:pde_q0} while $v_h^n$ solves the numerical scheme  \eqref{eqn:pde_q0_fd}. Then there holds
\begin{equation*}
   \|Av(t_n)- A_h v_h^n\|_{L^\infty(\Omega)} \leq c(h^2|\log h|^3+\tau t_n^{-1}).
\end{equation*}
\end{lemma}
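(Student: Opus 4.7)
My plan is to split the total error into a spatial (semidiscrete) contribution and a temporal contribution by introducing the spatially semidiscrete solution $\tilde v_h(t)\in V_h$ satisfying
\begin{equation*}
\partial_t^\alpha \tilde v_h(t) + A_h\tilde v_h(t) = P_h f(t),\qquad \tilde v_h(0)=R_h v_0,
\end{equation*}
and writing $Av(t_n)-A_h v_h^n = \bigl[Av(t_n)-A_h\tilde v_h(t_n)\bigr] + \bigl[A_h\tilde v_h(t_n)-A_h v_h^n\bigr]$. The spatial piece will carry all the $h^2|\log h|^3$ factors and the temporal piece will carry the $\tau t_n^{-1}$ factor. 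Throughout, the $L^\infty$-resolvent estimates \eqref{eqn:Linfty_resolvent} and \eqref{eqn:dis_Linfty_resolvent} and the smoothing bounds in Lemmas \ref{lem:op_Linfty} and \eqref{eqn:dis_sm_prop} replace the $L^2$-based tools used in Lemma \ref{lem:op}.

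For the spatial error $Av(t_n)-A_h\tilde v_h(t_n)$, I would represent both $v(t)$ and $\tilde v_h(t)$ via Laplace inversion along $\Gamma_{\theta,\kappa}$ using the operators $F,E,F_h,E_h$, so that the difference becomes a contour integral involving $(z^\alpha+A)^{-1}-(z^\alpha+A_h)^{-1}P_h$ applied to $v_0$ and $f$. The key identity $A_h R_h = P_h A + c_0(R_h-P_h)$ allows me to rewrite $A_h\tilde v_h$ in terms of $P_h$ acting on $Av_0$ and $Af$, so that the remaining error splits into (i) $L^2$- and Ritz-projection errors acting on $Av_0$ and $Af$, which by \eqref{eqn:error_l2}–\eqref{eqn:error_ritz} and the hypothesis $Au_0,Af\in D_\infty(A)$ contribute $O(h^2|\log h|)$ times a bounded factor, and (ii) a standard Galerkin part bounded via the discrete $L^\infty$-resolvent estimate combined with maximum-norm elliptic finite element error bounds; integrating these contour estimates produces the extra logarithmic factors, yielding the $O(h^2|\log h|^3)$ bound.

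For the temporal error $A_h\tilde v_h(t_n)-A_h v_h^n$, both terms solve fractional ODEs in $V_h$ driven by the same right-hand side $P_h f$, so the difference comes purely from the CQ approximation of $\partial_t^\alpha$. Apply the discrete Laplace transform representation \eqref{eqn:sol_op_fd} together with the standard consistency identity for BECQ,
\begin{equation*}
\delta_\tau(e^{-z\tau})^\alpha = z^\alpha + O(z^{\alpha+1}\tau)\quad\text{as }z\tau\to 0,
\end{equation*}
and close the estimate using the semidiscrete analogue of \eqref{eqn:al-deriv}, namely $\|\partial_t(A_h\tilde v_h(t))\|_{L^\infty(\Omega)}\le c t^{-1}$, which follows from the smoothing property \eqref{eqn:dis_sm_prop} and the representation of $\tilde v_h$. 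This is exactly the regularity pattern for which first-order BECQ on a uniform mesh gives $O(\tau t_n^{-1})$ in maximum norm.

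The main obstacle will be the spatial estimate, specifically controlling the commutator between $A$ and the projections $P_h, R_h$ in the $L^\infty$-norm along the Laplace contour. One must avoid a naive use of the inverse inequality on $\Delta_h(P_h-R_h)f$ (which would spoil the $h^2$ rate) and instead exploit the structural identity $A_h R_h=P_h A+c_0(R_h-P_h)$ to move $A$ past the projections onto the already-smooth data $Au_0$ and $Af$; the three $|\log h|$ factors then arise from, respectively, the maximum-norm Ritz projection bound \eqref{eqn:error_ritz}, the maximum-norm elliptic regularity used to control $W^{2,\infty}$-norms by $L^\infty$-norms of $A$-images on convex polyhedra, and the logarithmic factor picked up when integrating the contour bound $\int_{\Gamma_{\theta,\kappa}^\tau}|z|^{-1}e^{\Re(z)t_n}|\dz|$.
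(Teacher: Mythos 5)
Your proposal follows essentially the same route as the paper's proof: the same splitting through the spatially semidiscrete solution, the same key identity $A_hR_h=P_hA+c_0(R_h-P_h)$ to move $A$ onto the smooth data $Av_0$ and $Af$, the projection error bounds \eqref{eqn:error_l2}--\eqref{eqn:error_ritz} together with the $L^\infty$ smoothing/resolvent estimates for the spatial part, and the CQ consistency plus the $t^{-1}$ temporal regularity for the time-discretization part (which the paper packages as cited operator-error bounds for $F_h(t_n)-F_{h,\tau}^n$ and the discretized convolution). The plan is sound and matches the paper's argument in all essential respects.
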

\begin{proof}
To derive the error estimate, we design an auxillary function $v_h$ solving the semidiscrete problem: find $v_h(t)\in V_h$ such that 
\begin{equation}\label{eqn:pde_q0_sd}
\partial_t^\al v_h(t) + A_h v_h(t) = P_h f(t) \quad \forall  \, t\in (0,T],\quad \text{with}~~v_h(0)=R_h v_0.
\end{equation}
Similar to \eqref{eqn:sol-rep}, $v_h$ can be represented by \cite[Section 2.3]{JinZhou:2023book}
\begin{align}\label{eqn:sol-rep-h}
    v_h(t)= F_h(t)R_h v_0+\int_0^t E_h(t-s) P_h f(s)\ds,
\end{align}
where 
\begin{equation*} 
    E_h(t):=\frac{1}{2\pi {\rm i}}\int_{\Gamma_{\theta,\kappa}} (z^\alpha +A_h)^{-1}e^{zt} \, \dz \quad\mbox{and}\quad
F_h(t):=\frac{1}{2\pi {\rm i}}\int_{\Gamma_{\theta,\kappa }}z^{\alpha-1}  (z^\alpha+A_h)^{-1}e^{zt} \, \dz 
\end{equation*}
We first estimate $\| Av(t)-A_hv_h(t)\|_{L^\infty\II} $. \red{With the solution representation}, we have
\begin{align*}
    Av(t)-A_hv_h(t)= & \left(F(t)A v_0-F_h(t)A_h R_h v_0\right) + \int_0^t E_h(t-s) A_h(R_h-P_h)f \ds \\
      &+ \int_0^t E(t-s)Af -E_h(t-s)A_hR_hf \ds=\mathrm{I}_1(t)+\mathrm{I}_2(t)+\mathrm{I}_3(t).
\end{align*}
To estimate $\mathrm{I}_1$, we note $A_hR_h=P_hA+c_0(R_h-P_h)$ and obtain
\begin{align*}
    \mathrm{I}_1(t)=  (F(t) - F_h(t) P_h) A v_0 + c_0 F_h(t) (R_h-P_h) v_0.
\end{align*}
The estimates \eqref{eqn:dis_sm_prop}, \eqref{eqn:error_l2} and \eqref{eqn:error_ritz} lead to
\begin{align*}
  \|F_h(t) (R_h-P_h) v_0\|_{L^\infty\II} \le c  \|  (R_h-P_h) v_0\|_{L^\infty\II} \le c h^2  |\log h| \| v_0 \|_{W^{2,\infty}\II}.
\end{align*}
Meanwhile, according to \cite[Lemma 4.2]{WangZhou:2020}, we derive that
\begin{align*}
  \|( F_h(t) P_h-F(t) ) A v_0\|_{L^\infty\II}\le c h^2  |\log h|^3 \| A v_0 \|_{W^{2,\infty}\II}.
\end{align*}
Therefore, we obtain
$$ \|  \mathrm{I}_1(t) \|_{L^\infty\II} \le c h^2  |\log h|^3.$$
To estimate $\mathrm{I}_2$, since \red{$-A_hE_h(t)=\frac{d}{dt}F_h(t)$}, we write 
\begin{align*}
    \mathrm{I}_2(t)=&\int_0^t E_h(s) A_h(R_h-P_h)f(t-s) \ds\\
                =&\red{-\int_0^t F_h(s)  (R_h-P_h)f'(t-s) \ds-F_h(t)(R_h-P_h)f(0)+(R_h-P_h)f(t)}.
\end{align*}
The estimate \eqref{eqn:dis_sm_prop}, the projection error estimates \eqref{eqn:error_l2} and \eqref{eqn:error_ritz}  directly imply that
\begin{equation*}
    \|\mathrm{I}_2(t)\|_{L^\infty\II}\le ch^2 |\log h| \left(\int_0^t \|f'(s)\|_{W^{2,\infty}\II}\ds+\|f\|_{C([0,T];W^{2,\infty}\II)}\right).
\end{equation*}
To estimate $\mathrm{I}_3$, still we apply  \red{$-AE(t)=\frac{d}{dt}F(t)$} and write
\begin{align*}
    \red{\mathrm{I}_3(t)=\int_0^t (F_h(s)R_h-F(s) )f'(t-s) \ds+(F_h(t)R_h-F(t))f(0)+(I-R_h)f(t).}
\end{align*}
Using the estimate \cite[Lemma 4.2]{WangZhou:2020} and the approximation error of Ritz projection in \eqref{eqn:error_ritz} lead to
\begin{align*}
      \|\mathrm{I}_3(t)\|_{L^\infty\II}\le ch^2 |\log h|^3 \left(\int_0^t \|f'(s)\|_{W^{2,\infty}\II}\ds+\|f\|_{C([0,T];W^{2,\infty}\II)}\right).
\end{align*}

Now we consider $\|  A_hv_h(t_n)- A_h v_h^n\|_{L^\infty\II} $. By the solution representation \eqref{eqn:sol-rep-h} and \eqref{eqn:sol-rep-fully}, we have 
\begin{align*}
     A_hv_h(t_n)- A_h v_h^n
     &=\left(F_h(t_n)-F_{h,\tau}^n\right)A_h R_h v_0+
     \Big[\int_0^t E_h(t-s)A_h P_h f(s)\ds- \tau \sum_{j=1}^{n}E_{h,\tau}^{n-j} A_hP_h f^j \Big]\\
     &=:\mathrm{II}_1^n +\mathrm{II}_2^n.
\end{align*}
With the identity $ A_h R_h=P_h A+c_0(R_h-P_h)$, we can write
\begin{align*}
    \mathrm{II}_1^n = \left(F_h(t_n)-F_{h,\tau}^n\right) P_h A v_0 + c_0 \left(F_h(t_n)-F_{h,\tau}^n\right)(R_h-P_h)v_0.
\end{align*}
Using the error estimate for $ F_h(t_n)-F_{h,\tau}^n $
 \cite[Lemma 4.9]{ZhangZhou:2023} we have
\begin{align*}
\| \mathrm{II}_1^n \|_{L^\infty\II} \le c\tau t_n^{-1} \| v_0 \|_{W^{2,\infty}\II}.
\end{align*}
To estimate $\mathrm{II}_2^n$, we insert $R_h$ and use $A_hR_h=P_hA+c_0(R_h-P_h)$,
\begin{align*}
    \mathrm{II}_2^n=& \left(\int_0^t E_h(t-s)A_h (P_h-R_h) f(s)\ds- \tau \sum_{j=1}^{n}E_{h,\tau}^{n-j} A_h(P_h-R_h) f^j\right)\\
    &+\left(\int_0^t E_h(t-s)P_h A f(s)\ds- \tau \sum_{j=1}^{n}E_{h,\tau}^{n-j} P_h A f^j\right) \\
    &+c_0\left(\int_0^t E_h(t-s) (R_h-P_h) f(s)\ds- \tau \sum_{j=1}^{n}E_{h,\tau}^{n-j}  (R_h-P_h) f^j\right) \\
    &= \mathrm{II}_{2,1}^n+\mathrm{II}_{2,2}^n+\mathrm{II}_{2,3}^n
\end{align*}
The terms $ \mathrm{II}_{2,1} $ and $ \mathrm{II}_{2,3} $ can be estimated with the same argument for $\mathrm{I}_2$
\begin{align*}
    \| \mathrm{II}_{2,1}^n\|_{L^\infty\II} + \| \mathrm{II}_{2,3}^n\|_{L^\infty\II} \le      ch^2 |\log h|\left(\|Af\|_{C([0,T];L^\infty\II)}+\int_0^t \|Af'(s)\|_{L^\infty\II)}\ds\right).
\end{align*}
Using the similar argument in \cite[Theorem 3.4]{JinZhou:2023book}, we achieve
\begin{align*}
    \| \mathrm{II}_{2,2}^n\|_{L^\infty\II}\le      c\tau t_n^{\al-1}\left(\|A f\|_{C([0,T];L^\infty\II)}+\int_0^t \|Af'(s)\|_{L^\infty \II)}\ds\right).
\end{align*}
Combining the bounds of $\mathrm{I}_1,\mathrm{I}_2, \mathrm{I}_3$, $\mathrm{II}_1$ and $\mathrm{II}_2$, the proof is completed.
\end{proof}

The following lemma provides the error analysis for the numerical scheme \eqref{eqn:fully-FP}.
\begin{lemma}\label{lem:pde_error}
Suppose $\rho^\dag\in \mathcal{B}\cap C^1[0,T]$, $u_0,Au_0,f\in D_{\infty}(A)$.
\red{Let $u= u(\rho^\dag)$ solve problem} \eqref{eqn:fde} while $u_h^n = u_h^n(\rho^\dag)$ solves the  scheme \eqref{eqn:fully-FP}. 
Then the following estimates hold:
\begin{equation*}
\begin{aligned}
 \| u(t_n)-  u_h^n\|_{L^\infty(\Omega)} \leq &c( h^2 |\log h|^3+\tau t_n^{\alpha-1}),\\
 \|Au(t_n)- A_h u_h^n\|_{L^\infty(\Omega)} \leq &c(h^2|\log h|^3+ \tau |\log \tau|t_n^{-1}).
\end{aligned}
\end{equation*} 
\end{lemma}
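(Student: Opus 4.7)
The plan is a two-step perturbation argument that reduces Lemma \ref{lem:pde_error} to Lemma \ref{lem:pde_q0_error}. Define the time-dependent source $g(t) := (c_0 - \rho^\dag(t)) u(t) + f$, so the original problem may be rewritten as $\partial_t^\alpha u + A u = g(t)$, with time-\emph{independent} elliptic operator. Let $\tilde u_h^n \in V_h$ denote the corresponding fully discrete approximation driven by $g$:
\[
\dtau \tilde u_h^n + A_h \tilde u_h^n = P_h g(t_n), \qquad \tilde u_h^0 = R_h u_0.
\]
The regularity results \eqref{eqn:al-deriv}--\eqref{eqn:deriv} combined with $\rho^\dag \in C^1[0,T]$ yield $\|g'(s)\|_{L^\infty(\Omega)} \le c s^{\alpha-1}$ (integrable) and $\|Ag'(s)\|_{L^\infty(\Omega)} \le c(1+s^{-1})$ (borderline non-integrable at $s=0$). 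Rerunning the proof of Lemma \ref{lem:pde_q0_error} with these source regularities and truncating the singularity naturally at the discrete time scale $s \sim \tau$ inside the contour integrals produces
\[
\|u(t_n) - \tilde u_h^n\|_{L^\infty(\Omega)} \le c(h^2|\log h|^3 + \tau t_n^{\alpha-1}), \qquad \|Au(t_n) - A_h \tilde u_h^n\|_{L^\infty(\Omega)} \le c(h^2|\log h|^3 + \tau|\log\tau| t_n^{-1}),
\]
where the extra $|\log\tau|$ arises precisely from $\int_\tau^{t_n}\|Ag'(s)\|_{L^\infty(\Omega)}\,\ds$.

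Next, consider the discrete perturbation $w^n := \tilde u_h^n - u_h^n \in V_h$, which vanishes at $n=0$ and satisfies
\[
\dtau w^n + A_h w^n = (c_0 - \rho^\dag(t_n))(u(t_n) - u_h^n).
\]
The discrete Duhamel principle gives $w^n = \tau \sum_{j=1}^n E_{h,\tau}^{n-j}(c_0 - \rho^\dag(t_j))(u(t_j) - u_h^j)$. Writing $u(t_j) - u_h^j = (u(t_j) - \tilde u_h^j) + w^j$ and invoking the smoothing property \eqref{eqn:dis_sm_prop} with $s=0$, i.e., $\|E_{h,\tau}^{n-j}\|_{L^\infty\to L^\infty} \le c t_{n-j+1}^{\alpha-1}$, yields the recursive inequality
\[
\|u(t_n)-u_h^n\|_{L^\infty(\Omega)} \le c\bigl(h^2|\log h|^3 + \tau t_n^{\alpha-1}\bigr) + c\tau \sum_{j=1}^n t_{n-j+1}^{\alpha-1}\|u(t_j)-u_h^j\|_{L^\infty(\Omega)},
\]
which closes under the discrete fractional Gr\"onwall inequality \cite[Theorem 10.2]{JinZhou:2023book} and delivers the first estimate.

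For the second estimate, I apply $A_h$ to the Duhamel formula for $w^n$ and invoke \eqref{eqn:dis_sm_prop} with $s=1$, giving $\|A_h E_{h,\tau}^{n-j}\|_{L^\infty\to L^\infty} \le c t_{n-j+1}^{-1}$. Plugging in the freshly established $L^\infty$-bound on $u(t_j)-u_h^j$ and splitting the convolution range at $j \sim n/2$, the sum $\tau \sum_{j=1}^n t_{n-j+1}^{-1}(h^2|\log h|^3 + \tau t_j^{\alpha-1})$ is controlled by $c|\log\tau|(h^2|\log h|^3 + \tau t_n^{-1})$ after using $t_j^{\alpha-1}\le T^\alpha t_n^{-1}$ on the relevant sub-range; combined with the bound on $\|Au(t_n)-A_h\tilde u_h^n\|_{L^\infty(\Omega)}$, this produces the second assertion. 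The main technical obstacle is the careful bookkeeping of the $|\log\tau|$ factor: the borderline singularity $\|Ag'(s)\|_{L^\infty(\Omega)} \sim s^{-1}$ must be tracked both inside the contour representation of $A_h E_{h,\tau}^{n-j}$ (following the treatment of $\mathrm{II}_{2,2}^n$ in the proof of Lemma \ref{lem:pde_q0_error}) and inside the convolution sum above, and the two sources of logarithm must be shown to produce a single $|\log\tau|$ rather than a $|\log\tau|^2$.
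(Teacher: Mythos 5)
Your proposal follows the same skeleton as the paper's proof: the auxiliary fully discrete solution driven by the ``frozen'' source $P_h f+(c_0-\rho^\dag(t_n))P_hu(t_n)$ is exactly the paper's $\overline{u}_h^n$, the splitting into (error of a time-independent-operator problem) $+$ (discrete perturbation) is the same, and the tools (Lemma \ref{lem:pde_q0_error}, discrete Duhamel, the smoothing property \eqref{eqn:dis_sm_prop}, discrete Gr\"onwall) are identical. Two points of divergence are worth recording. First, you are more careful than the paper about the fact that the auxiliary source $g$ satisfies only $\|Ag'(s)\|_{L^\infty(\Omega)}\le c(1+s^{-1})$, so that $g\notin W^{1,1}(0,T;D_\infty(A))$ and Lemma \ref{lem:pde_q0_error} does not apply verbatim; your truncation at $s\sim\tau$ is a reasonable repair, whereas the paper simply quotes the bound $c(h^2|\log h|^3+\tau t_n^{-1})$ for $\vartheta^n$ and lets the $|\log\tau|$ emerge later from the Gr\"onwall step. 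Second, and this is the one place where your route is genuinely lossier: for the second estimate you apply $A_h$ to the Duhamel sum and use the $s=1$ smoothing $\|A_hE_{h,\tau}^{n-j}\|_{L^\infty\to L^\infty}\le ct_{n-j+1}^{-1}$, a non-integrable kernel, against the $L^\infty$ error $\|u(t_j)-u_h^j\|_{L^\infty(\Omega)}$. The piece $\tau\sum_{j=1}^n t_{n-j+1}^{-1}\,h^2|\log h|^3\approx h^2|\log h|^3\log n$ then contaminates the spatial term with an extra $|\log\tau|$, so you obtain $c\big(h^2|\log h|^3|\log\tau|+\tau|\log\tau|t_n^{-1}\big)$ rather than the stated $c\big(h^2|\log h|^3+\tau|\log\tau|t_n^{-1}\big)$. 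The paper avoids this by commuting $A_h$ through $E_{h,\tau}^{n-j}$ onto the argument, writing $\varrho^n=\tau\sum_j E_{h,\tau}^{n-j}(c_0-\rho^\dag(t_j))A_h\big(u_h^j-P_hu(t_j)\big)$, comparing $A_h(u_h^j-P_hu(t_j))$ to $e^j=Au(t_j)-A_hu_h^j$ modulo projection errors, and closing a Gr\"onwall recursion directly in $e^n$ with the \emph{integrable} kernel $t_{n-j+1}^{\alpha-1}$; this preserves the clean $h^2|\log h|^3$ rate. If you adopt that rearrangement (set up the recursion in the $A$-error rather than post-processing the $L^\infty$ error), your argument reproduces the lemma exactly; as written it proves a marginally weaker bound.
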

\begin{proof}
The first estimate has been provided in \cite[Lemmas 3.2 and  4.2]{WangZhou:2020}. Then it suffices to show the second assertion. To this end,
we define $\overline{u}_h^n$ satisfying  
\begin{equation}\label{eqn:baru}
 \dtau \overline{u}_h^n  + A_h \overline{u}_h^n = P_h  f +(c_0 - \rho^\dag(t_n) ) P_h u(t_n;\rho^\dag) \quad \text{for}~~  n=1,\dots,N,
\end{equation}
and $\overline{u}_h^0=R_h u_0 $. Then
we can split the error as  
\begin{equation*}
    e^n:=Au(t_n)- A_h u_h^n=(Au(t_n)- A_h\overline{u}_h^n) + (A_h\overline{u}_h^n- A_h u_h^n)=:\vartheta^n+\varrho^n.
\end{equation*}
Using the  solution regularity \eqref{eqn:deriv} and Lemma \ref{lem:pde_q0_error}, we derive
\begin{equation*}
    \|\vartheta^n\|_{L^\infty\II}\le c(h^2 |\log h|^3+\tau t_n^{-1}).
\end{equation*}
To estimate $\varrho^n$, note  $ \overline{u}_h^0-   u_h^0=0  $ and 
\begin{equation*}
\dtau (\overline{u}_h^n-   u_h^n) + A_h   (\overline{u}_h^n-   u_h^n) = (c_0-\rho^\dag(t_n))( u_h^n-P_h u(t_n)).
\end{equation*}
Hence $\varrho^n$ admits following representation 
\begin{align*}
    \varrho^n= & \tau \sum_{j=1}^{n} E_{h,\tau}^{n-j} \left( (c_0-\rho^\dag(t_j))A_h(u_h^j-P_hu(t_j)) \right)\\
    = & \tau \sum_{j=1}^{n} E_{h,\tau}^{n-j} \left( (c_0-\rho^\dag(t_j))(A_h u_h^j- A u(t_j)) \right)+\tau \sum_{j=1}^{n} E_{h,\tau}^{n-j} \left( (c_0-\rho^\dag(t_j))(  A u(t_j)-A_hR_h u(t_j)) \right)\\
    &+\tau \sum_{j=1}^{n} E_{h,\tau}^{n-j} \left( (c_0-\rho^\dag(t_j))(   A_hR_h u(t_j))-A_hP_h u(t_j)) \right)=\varrho_1^n+\varrho_1^n+\varrho_3^n
\end{align*}
By {smoothing property of $  E_{h,\tau}^{n-j} $} in \eqref{eqn:dis_sm_prop} and smoothness of $\rho^\dag$, we immediately get
\begin{equation*}
    \| \varrho_1^n\|_{L^\infty\II}\le c \tau \sum_{j=1}^{n}  t_{n-j+1} ^{\al-1}   \| A_h u_h^j -A u(t_j) \|_{L^\infty\II} \le c \tau \sum_{j=1}^{n}  t_{n-j+1}^{\al-1}   \| e_j\|_{L^\infty\II}.
\end{equation*}
Since $A_h R_h=P_h A+c_0(R_h-P_h)$, \red{with the projection error} \eqref{eqn:error_l2}, we derive 
\begin{align*}
    \| \varrho_2^n\|_{L^\infty\II}\le & c \tau \sum_{j=1}^{n}  t_{n-j+1}^{\al-1} \left(  \|   A u(t_j)-P_h A u(t_j) \|_{L^\infty\II} + c_0\|  (R_h-P_h) u(t_j) \|_{L^\infty\II}\right)\\
    \le & c \tau  \sum_{j=1}^{n}  t_{n-j+1}^{\al-1}  \left(h^2 \| Au(t_j)\|_{W^{2,\infty}\II} +h^2|\log h| \| u(t_j)\|_{W^{2,\infty}\II}  \right)\\
    \le & c  t_j^{\al}h^2\| Au(t_j)\|_{W^{2,\infty}\II}+c  t_j^{\al}h^2|\log h| \|  u(t_j)\|_{W^{2,\infty}\II}.
\end{align*}
To estimate $ \varrho_3^n$, with {smoothing property of $E_{h,\tau}^{n-j}$} in \eqref{eqn:dis_sm_prop} and projection error \eqref{eqn:error_l2} and \eqref{eqn:error_ritz}, we obtain
\begin{align*}
\|\varrho_3^n\|_{L^\infty\II}\le   c\tau \sum_{j=1}^{n}   t_{n-j+1} ^{\epsilon\alpha-1}\|A_h^{\epsilon}(R_h-P_h)u(t_j)\|_{L^\infty\II}. 
\end{align*}
Here we apply the inverse inequality for any $\phi_h \in V_h$ and $p\in[1,\infty]$ \cite[Theorem 4.5.11]{Brenner:2008}
\begin{align*}
\| A_h  \phi_h \|_{L^\infty\II} &\le 
c h^{-d/p}\| A_h  \phi_h \|_{L^p\II} 
= c h^{-d/p} \sup_{\psi_h \in V_h} \frac{(\nabla \phi_h, \nabla \psi_h) + c_0(\phi_h,\psi_h)}{\| \psi_h  \|_{L^{p^*}\II}}\\
& \le c h^{-d/p - 2} \| \phi_h \|_{L^p\II}
\le c h^{-d/p - 2} \| \phi_h \|_{L^\infty\II}.
\end{align*}
By interpolation inequality, we conclude that
\begin{align*}
\| A_h^\epsilon  \phi_h \|_{L^\infty\II} 
 \le c h^{-d/p - 2} \| \phi_h \|_{L^p\II}
\le c h^{-(d/p + 2)\epsilon} \| \phi_h \|_{L^\infty\II}.
\end{align*}
Then we obtain the estimate for $\varrho_3^n$ by taking $\epsilon=1/|\log h|$ such that
\begin{equation*}
     \|\varrho_3^n\|_{L^\infty\II}\le  c \epsilon^{-1} h^{2-(d/p + 2)\epsilon} |\log h| \le c h^2  |\log h|^2 .
\end{equation*}
Combining above estimates, we derive 
\begin{align*}
    \| e^n\|_{L^\infty\II}\le c \tau \sum_{j=1}^{n}  t_{n-j+1} ^{\al-1}   \| e_j\|_{L^\infty\II}+c(h^2|\log h|^3+\tau t_n^{-1}).
\end{align*}
Then the desired estimate follows immediately by using discrete Gr\"onwall’s inequality.
\end{proof}

\subsection{Numerical reconstruction for  \IPP}\label{inverse_potential}
In this part, we state the numerical reconstruction scheme for  IPP. Recall that the measurement is taken at a fixed point $x_0\in \overline\Omega$: $g(t)=u^{\dag}(x_0,t)$, for $t\in [0,T]$. Throughout, we assume that we have $C([0,T])$ noisy measurement data $ g_\delta$ satisfying \eqref{eqn:noise}.
Under Assumption \ref{assum:stab} (ii),  $g\ge \underline{c}_u>0$ is strictly positive. Hence we may assume the noisy measurement $g_\delta$ is also strictly positive:
\begin{equation}\label{eqn:positive_data}
    0<\frac{\underline{c}_u}{2}\le g_\delta\le \overline{c}_u+\frac{\underline{c}_u}{2}.
\end{equation}
We define the admissible set for discretized potential as 
\begin{equation*}
    \mathcal{B}_N:=\{\rho=(\rho^n)_{n=1}^N:0\le \rho^n\le \overline{c}_\rho\}.
\end{equation*}
Then the numerical reconstruction scheme for IPP is given as follows: let $\rho_0=(\rho_0^n)_{n=1}^N\in \mathcal{B}_N$ be an initial guess, we update $\rho_{k+1}=(\rho_{k+1}^n)_{n=1}^N $ from $\rho_{k}=(\rho_{k}^n)_{n=1}^N $ by 
\begin{equation}\label{eqn:iter_dis}
    \rho_{k+1}^n=P_{\mathcal{B}}\left[\frac{f(x_0)+\Delta_h u_h^n(x_0;\rho_k)-\dtau g_\delta(t_n)}{g_\delta(t_n)} \right],\quad n=1,\dots,N,
\end{equation}
where $ u_h^n(x_0;\rho_k)$ is the solution of \eqref{eqn:fully-FP} with potential $\rho_k$. 

In the following, we want to show the convergence of the iteration scheme \eqref{eqn:iter_dis} and analyze the reconstruction error. We first introduce the  $\ell^p$ norm and weighted $\ell^p$ norm.
For a sequence $v_n\in X$, $n=1,\dots$, we define  $\ell^p(X)$ norm  as follows
$$
\|(v^n)_{n=1}^\infty\|_{\ell^p(X)}:=
\left\{
\begin{aligned}
&\bigg(\sum_{n=1}^\infty\tau\|v^n\|_{X}^p\bigg)^{\frac{1}{p}},  &&\mbox{if}\,\,\, 1\le p<\infty,\\
&\sup_{n\ge 1}\|v^n\|_{X}, &&\mbox{if}\,\,\, p=\infty .
\end{aligned}\right.
$$
For a sequence $v_n\in X$, $n=1,\dots$, we define the weighted $\ell^p_\omega(X)$ norm with a fixed $\omega\ge 0$ as follows
$$
\|[v^n]_{n=1}^\infty\|_{\ell^p_\omega(X)}:=
\left\{
\begin{aligned}
&\bigg(\tau\sum_{n=1}^\infty (e^{-\omega t_n}\|v^n\|_{X})^p\bigg)^{\frac{1}{p}},  &&\mbox{if}\,\,\, 1\le p<\infty,\\
&\sup_{n\ge 1} e^{-\omega t_n}\|v^n\|_{X}, &&\mbox{if}\,\,\, p=\infty .
\end{aligned}\right.
$$
For any sequence $(v^n)_{n=1}^\infty$ and $p\in[0,\infty]$, it is straightforward to observe that
$\|\cdot\|_{\ell^p_\omega(X)}$ and $\|\cdot\|_{\ell^p(X)}$ are equivalent.
Throughout, if $X=\mathbb{R}$, we simplify the notation of $\ell^p(\mathbb{R})$ and $\ell^p_\omega(\mathbb{R})$ to $\ell^p$ and $\ell_\omega^p$, respectively.
\begin{theorem}\label{thm:recon}
Let Assumption \ref{assum:err} be valid and the observation $g_\delta$ satisfy \eqref{eqn:noise}. 
Then with any  $\rho_0=(\rho_0^n)_{n=1}^N \in \mathcal{B}_N$,
the iteration \eqref{eqn:iter_dis} generates a sequence $\rho_k = (\rho_k^n)_{n=1}^N \in \mathcal{B}_N$, that converges to a limit $\rho_* = (\rho_*^n)_{n=1}^N\in \mathcal{B}_N$  such that
\begin{equation}\label{eqn:conv-dis}
  \| [\rho_{k}^n  -  \rho_{*}^n]_{n=1}^N \|_{\ell_\omega^p} \le  (c \omega^{-\alpha})^{k}\| [\rho_{0}^n  -  \rho_{*}^n]_{n=1}^N \|_{\ell_\omega^p},\quad \text{for all}~~ p\in(1,\infty),
\end{equation}
when the weight parameter $\omega$ is sufficiently large. In addition,  there holds the error estimate
\begin{equation}\label{eqn:inv-err}
  \| [\rho^\dag(t_n) - \rho_*^n  ]_{n=1}^N \|_{\ell^p } \le   c\left(\tau^{1/p}|\log\tau | + h^{2}|\log h|^3 + \tau^{-\al} \delta\right).
\end{equation} 
\end{theorem}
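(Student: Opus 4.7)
The plan is to prove Theorem \ref{thm:recon} in two stages, each a discrete analogue of the continuous arguments in Section \ref{sec:stability}. First I would set up a discrete Banach contraction in the weighted norm $\|\cdot\|_{\ell_\omega^p}$ to produce the fixed point $\rho_*$ and the geometric rate \eqref{eqn:conv-dis}; then I would estimate $\rho^\dag(t_n)-\rho_*^n$ by comparing the continuous identity for $\rho^\dag$ with the discrete fixed-point identity for $\rho_*^n$, absorbing the unknown piece via the weighted norm.

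For the first stage, define $M_N:\mathcal{B}_N\to\mathcal{B}_N$ by $(M_N\rho)^n = P_{\mathcal{B}}[(f(x_0)+\Delta_h u_h^n(x_0;\rho)-\dtau g_\delta(t_n))/g_\delta(t_n)]$. Using the $1$-Lipschitz property of $P_{\mathcal{B}}$ together with the lower bound $g_\delta\ge \underline{c}_u/2$ from \eqref{eqn:positive_data}, controlling $|(M_N\rho_1)^n-(M_N\rho_2)^n|$ reduces to controlling $|A_h(u_h^n(\rho_1)-u_h^n(\rho_2))(x_0)|$. The solution representation \eqref{eqn:sol-rep-fully} rewrites this difference as the discrete convolution
\begin{equation*}
A_h(u_h^n(\rho_1)-u_h^n(\rho_2)) = \tau\sum_{j=1}^n A_h E_{h,\tau}^{n-j}\big[(\rho_2-\rho_1)(t_j)\,u_h^j(\rho_1)+(\rho_2(t_j)-c_0)(u_h^j(\rho_2)-u_h^j(\rho_1))\big].
\end{equation*}
Combining the discrete smoothing estimate \eqref{eqn:dis_sm_prop}, the uniform bound $\|A_h u_h^j\|_{L^\infty\II}\le c$ from Lemma \ref{lem:uhn-apriori}, and a discrete Young inequality for convolutions in the weighted norm would yield $\|[A_h(u_h^n(\rho_1)-u_h^n(\rho_2))]_{n=1}^N\|_{\ell_\omega^p(L^\infty\II)} \le c\omega^{-\alpha}\|\rho_1-\rho_2\|_{\ell_\omega^p}$, exactly mirroring the bound for $\mathrm{I}_{2,1}+\mathrm{I}_{2,2}$ in the proof of Theorem \ref{thm:stab}. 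For $\omega$ large this makes $M_N$ a strict contraction in $\ell_\omega^p$, and Banach's theorem supplies $\rho_*$ and the geometric rate \eqref{eqn:conv-dis}.

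For the error estimate, I would express $\rho^\dag(t_n)=(f(x_0)-Au^\dag(x_0,t_n)-\partial_t^\alpha u^\dag(x_0,t_n)+c_0 u^\dag(x_0,t_n))/u^\dag(x_0,t_n)$ and subtract the discrete fixed-point identity for $\rho_*^n$. The $1$-Lipschitz property of $P_{\mathcal{B}}$ then bounds $|\rho^\dag(t_n)-\rho_*^n|$ by the sum of three contributions: (i) a denominator-mismatch term controlled by $|u^\dag(x_0,\cdot)-g_\delta|\le\delta$ from \eqref{eqn:noise}; (ii) an elliptic term $|Au^\dag(x_0,t_n)-A_h u_h^n(x_0;\rho_*)|$, split through $A_h u_h^n(x_0;\rho^\dag)$ into a discretization piece bounded by Lemma \ref{lem:pde_error} as $c(h^2|\log h|^3+\tau|\log\tau|t_n^{-1})$ and a stability piece $|A_h[u_h^n(\rho^\dag)-u_h^n(\rho_*)](x_0)|$ bounded as in Stage 1 by a convolution kernel applied to $|\rho^\dag(t_j)-\rho_*^j|$; and (iii) a fractional-derivative term $|\partial_t^\alpha u^\dag(x_0,t_n)-\dtau g_\delta(t_n)|$, further split into the BECQ consistency error $|\partial_t^\alpha u^\dag-\dtau u^\dag|(x_0,t_n)$, of order $\tau t_n^{\alpha-1}$ via \eqref{eqn:al-deriv}--\eqref{eqn:deriv}, plus a noise-propagation term $|\dtau(u^\dag-g_\delta)(x_0,t_n)|\le c\tau^{-\alpha}\delta$, obtained from the uniform summability of the BECQ weights $\sum_{j=0}^n|\omega_j^{(\alpha)}|\le c$.

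Taking the $\ell_\omega^p$ norm of the pointwise bound, the stability piece in (ii) contributes $c\omega^{-\alpha}\|\rho^\dag-\rho_*\|_{\ell_\omega^p}$, which is absorbed into the left-hand side by choosing $\omega$ large. Returning to $\ell^p$ by norm equivalence, the residual terms assemble into the three contributions in \eqref{eqn:inv-err}: the spatial error $h^2|\log h|^3$, the temporal consistency term $\tau|\log\tau|\,t_n^{-1}$, which evaluates to $\tau^{1/p}|\log\tau|$ in $\ell^p$ (using $\tau\sum_{n=1}^N t_n^{-p}\le c\tau^{1-p}$ for $p>1$), and the data contribution $\tau^{-\alpha}\delta$. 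The main technical obstacle will be making the Stage 1 convolution estimate rigorous in $\ell_\omega^p(L^\infty\II)$: this is where the maximum-norm resolvent estimate \eqref{eqn:dis_Linfty_resolvent}, the sharp smoothing bound \eqref{eqn:dis_sm_prop}, and the a priori bound of Lemma \ref{lem:uhn-apriori} are all essential, whereas Stage 2 then reduces to a mechanical combination of Lemma \ref{lem:pde_error} with standard BECQ consistency and noise estimates.
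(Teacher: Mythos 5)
Your proposal follows essentially the same route as the paper's proof: the same discrete contraction map built from the cut-off operator, the solution representation \eqref{eqn:sol-rep-fully}, the a priori bound of Lemma \ref{lem:uhn-apriori} and the smoothing property \eqref{eqn:dis_sm_prop} with a weighted discrete Young inequality for \eqref{eqn:conv-dis}; and the same decomposition of $\rho^\dag(t_n)-\rho_*^n$ into noise, elliptic (split through $A_h u_h^n(\rho^\dag)$ via Lemma \ref{lem:pde_error}), and fractional-derivative contributions, with the $c\omega^{-\alpha}$ piece absorbed for large $\omega$. The only cosmetic difference is that you bound the term $\partial_t^\alpha g(t_n)-\dtau g_\delta(t_n)$ by an explicit consistency-plus-noise splitting, whereas the paper cites this as a known lemma; both yield $\tau^{1/p}|\log\tau|+\tau^{-\alpha}\delta$.
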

\begin{proof}
First, we show the convergence of the iteration scheme \eqref{eqn:iter_dis} by the contraction mapping theorem. We define $M_{h,\tau}:\mathcal{B}\rightarrow \mathcal{B}_N$ s.t. for any $\rho\in \mathcal{B}_N$
    \begin{equation*}
        (M_{h,\tau}\rho)^n= P_{\mathcal{B}}\left[\frac{f(x_0)+\Delta_h u_h^n(x_0;\rho)-\dtau g_\delta(t_n)}{g_\delta(t_n)} \right],\quad n=1,\dots,N.
    \end{equation*}
In the following, we prove that $M_{h,\tau}$ is a contraction mapping with $\ell_\omega^p$ topology for sufficiently large $\omega$. For $\rho_1,\, \rho_2\in \mathcal{B}_N$, we use the stability of the cut-off operator, positivity of measurement \eqref{eqn:positive_data} and obtain that
    \begin{align*}
        |(M_{h,\tau}\rho_1)^n-(M_{h,\tau}\rho_2)^n|=& \left|\frac{A_h u_h^n(x_0;\rho_1)-A_h u_h^n(x_0;\rho_2)}{g_\delta(t_n)}-c_0\frac{ u_h^n(x_0;\rho_1)- u_h^n(x_0;\rho_2)}{g_\delta(t_n)}\right|\\
        \le& c|A_h u_h^n(x_0;\rho_1)-A_h u_h^n(x_0;\rho_2) | + c |  u_h^n(x_0;\rho_1)-  u_h^n(x_0;\rho_2) |.
    \end{align*}
Denote $w_h^n= u_h^n(x_0;\rho_1)-  u_h^n(x_0;\rho_2)$ which satisfying $w_h^0=0$ and 
    \begin{equation*}
        \dtau w_h^n+A_h w_h^n=(\rho_2(t_n)- \rho_1(t_n))u_h^n(\rho_2) -(\rho_1(t_n)-c_0) w_h^n.
    \end{equation*} 
    We have following solution representation \red{of $w_h^n$}:
    \begin{equation*}
        w_h^n=\tau\sum_{j=1}^N E_{h,\tau}^{n-j} \left((\rho_2^j-\rho_1^j)u_h^j(\rho_2)-(\rho_1^j-c_0) w_h^j \right)
    \end{equation*}
    and hence 
    \begin{equation*}
        A_h w_h^n=\tau\sum_{j=1}^N E_{h,\tau}^{n-j} \left((\rho_2^j-\rho_1^j)A_h u_h^j(\rho_2)-(\rho_1^j-c_0) A_h w_h^j \right).
    \end{equation*}  
Since $0\le \rho_1^j, \rho_2^j\le \overline{c}_\rho  $ and  by Lemma \ref{lem:uhn-apriori} we have $ \| A_h  u_h^j(\rho_2)\|_{L^2\II}\le c$ for all $j=1,\dots,N$. Then the smoothing property of $E_{h,\tau}^j$ \eqref{eqn:dis_sm_prop} implies that
\begin{equation*}
\|A_h  w_h^n \|_{L^\infty\II}\le c\tau \sum_{j=1}^{n} t_{n-j+1} ^{\al-1}\left(|\rho_2^j-\rho_1^j|+\|A_h w_h^j\|_{L^2\II} \right)
\end{equation*}
Taking $\ell^p_\omega$ norm on both sides, the Young's inequality implies that
    \begin{align*}
        \| (A_h  w_h^n)_{n=1}^N \|_{\ell_\omega^p(L^\infty\II)}\le & c\left(\sum_{n=1}^N \tau^{p+1}\left| \sum_{j=1}^n e^{-\omega t_{n-j}} t_{n-j+1} ^{\al-1} e^{-\omega t_j}\left( |\rho_2^j-\rho_1^j|+\|A_h w_h^j\|_{L^\infty\II}\right)  \right|^p \right)^{\frac{1}{p}} \\
        \le & c\left(\tau \sum_{n=1}^N e^{-\omega t_{n-1}} t_n^{\al-1} \right)\left(  \| ( \rho_2^n-\rho_1^n)_{n=1}^N \|_{\ell_\omega^p}+  \| (A_h  w_h^n)_{n=1}^N \|_{\ell_\omega^p(L^\infty\II)}   \right)\\
        \le & c \omega^{-\alpha}\left(  \| ( \rho_2^n-\rho_1^n)_{n=1}^N \|_{\ell_\omega^p}+  \| (A_h  w_h^n)_{n=1}^N \|_{\ell_\omega^p(L^\infty\II)}   \right)
    \end{align*} 
As a consequence, \red{by taking $\omega$ sufficiently large, we obtain}
    \begin{equation}\label{eqn:Delta_du_Linfty}
        \begin{aligned}
            \| (A_h  w_h^n)_{n=1}^N \|_{\ell_\omega^p(L^\infty\II)} \le 
              \| ( \rho_2^n-\rho_1^n)_{n=1}^N \|_{\ell_\omega^p} (c \omega^{-\alpha} ).  
        \end{aligned}
    \end{equation}
Similarly, we may derive 
\begin{equation}\label{eqn:Delta_du_Linfty2}
\| (   w_h^n)_{n=1}^N \|_{\ell_\omega^p(L^\infty\II)} \le    \| ( \rho_2^n-\rho_1^n)_{n=1}^N \|_{\ell_\omega^p} (c \omega^{-\alpha} ).
\end{equation}
Again, \red{by taking $\omega$ sufficiently large}, $M_{h,\tau}$ is a contraction mapping on $\mathcal{B}_N$, i.e.
    \begin{equation*}
         \|  ( (K_{h,\tau}\rho_1)^n-(K_{h,\tau}\rho_2)^n)_{n=1}^N \|_{\ell_\omega^p} \le    \| ( \rho_2^n-\rho_1^n)_{n=1}^N \|_{\ell_\omega^p} (c \omega^{-\alpha} )\quad\forall~ \rho_1, \rho_2 \in \mathcal{B}_N.
    \end{equation*}
Then we conclude that the sequence $\{\rho_k\}$ converges to a limit $\rho_*=(\rho_*^n)_{n=1}^N\in \mathcal{B}_N$ such that
    \begin{equation*}
        \|  ( \rho_k^n-\rho_*^n)_{n=1}^N \|_{\ell_\omega^p} \le  \| ( \rho_0^n-\rho_*^n)_{n=1}^N \|_{\ell_\omega^p}  ( c\omega^{-\alpha})^k  .
    \end{equation*}
    
Next, we study the reconstruction error between the limit $(\rho_*^n)_{n=1}^N \in \mathcal{B}_N$ and the exact potential $\rho^\dag\in \mathcal{B}$. Since $(\rho_*^n)_{n=1}^N$ is a fixed point of $M_{h,\tau}$, by the stability of cut-off operator $P_{\mathcal{B}}$, we have
    \begin{align*}
        |\rho^\dag(t_n)-\rho_*^n|= & \left|\rho^\dag(t_n)- P_{\mathcal{B}}\left[\frac{f(x_0)+\Delta_h u_h^n(x_0;\rho_*)-\dtau g_\delta(t_n)}{g_\delta(t_n)} \right] \right|\\
        \le &\left| \frac{f(x_0)+\Delta     u(x_0,t_n;\rho^\dag)-\partial_t^\al g(t_n)}{g(t_n)} - \frac{f(x_0)+\Delta_h  u_h^n(x_0;\rho_*)-\dtau g_\delta(t_n)}{g_\delta(t_n)}  \right|\\
        \le & \left| \frac{f(x_0)}{g(t_n)}- \frac{f(x_0)}{g_\delta(t_n)}\right|+\left| \frac{A  u(x_0,t_n;\rho^\dag)}{g(t_n)} -\frac{ A_h u_h^n(x_0;\rho_*)}{g_\delta(t_n)}  \right|  +c_0\left|\frac{u(x_0,t_n;\rho^\dag)}{g(t_n)} - \frac{   u_h^n(x_0;\rho_*)}{g_\delta(t_n)}  \right|\\
        &+\left| \frac{\partial_t^\al g(t_n)}{g(t_n)}-\frac{\dtau g_\delta(t_n)}{g_\delta(t_n)}   \right|   = \mathrm{I}_1^n+\mathrm{I}_2^n+\mathrm{I}_3^n+\mathrm{I}_4^n.
    \end{align*}
    \red{Upon recalling} Assumption \ref{assum:err}, we have $|f(x_0)|\le  \|f\|_{L^\infty\II}\le c$. Then the \red{strict positivity} of $g(t)$, $g_\delta(t)$ directly implies that
    \begin{equation*}
        \|(\mathrm{I}_1^n)_{n=1}^N\|_{\ell^p}\le c \|f\|_{L^\infty\II } \|(g_\delta(t_n)-g(t_n))_{n=1}^N\|_{\ell^p}\le c\delta.
    \end{equation*}
   To estimate  $\mathrm{I}_2$, Assumption \ref{assum:err} (ii) and condition \eqref{eqn:positive_data} yield that
    \begin{align*}
        |\mathrm{I}_2^n|\le &\left|\frac{g_\delta(t_n) A u(x_0,t_n;\rho^\dag)-g(t_n)A_h u_h^n(x_0;\rho_*)  }{g_\delta(t_n)g(t_n)} \right|\\
        \le & c\left(| (g_\delta(t_n)-g(t_n))A u(x_0,t_n;\rho^\dag)| + |g(t_n)(A u(x_0,t_n;\rho^\dag)-   A_h u_h^n(x_0;\rho_*)   )|\right)\\
        \le &  c \delta\| A u(x_0,t_n;\rho^\dag)\|_{C([0,T])}+c| A_h u_h^n(x_0;\rho_*)-A u(x_0,t_n;\rho^\dag)|\\
        \le & c\delta
        + c \|A u(t_n;\rho^\dag) -A_h u_h^n(\rho^\dag)\|_{L^\infty\II}
        + c \|A_h u_h^n(\rho^\dag)- A_h u_h^n(\rho_*)\|_{L^\infty\II}.
    \end{align*}
Lemma \ref{lem:pde_error} directly implies that
\begin{equation*}
\|A u(t_n;\rho^\dag) -A_h u_h^n(\rho^\dag)\|_{L^\infty\II}
\le c(h^2|\log h|^3+\tau |\log \tau|t_n^{-1}).
\end{equation*}
For the other term $ \| A_h u_h^n(\rho^\dag)- A_h u_h^n(\rho_*) \|_{L^\infty\II}$, by \eqref{eqn:Delta_du_Linfty}, we obtain 
\begin{equation*}
\| (A_h u_h^n( \rho^\dag)-A_h  u_h^n( \rho_*))_{n=1}^N \|_{\ell_\omega^p(L^\infty\II)} \le  c \omega^{-\alpha}  \| (\rho^{\dag}(t_n)- \rho_*^n)_{n=1}^N \|_{\ell_\omega^p},
\end{equation*}
    for sufficiently large $\omega$.
    Take the $\ell^p_\omega$ norm of $(\mathrm{I}_2^n)_{n=1}^N$, we obtain
    \begin{align*}
         \|(\mathrm{I}_2^n)_{n=1}^N\|_{\ell^p_{\omega}}\le c\left(\delta+h^{2 } |\log h|^3+\tau^{1/p} |\log \tau| \right)+c \omega^{-\alpha}  \| ( \rho^{\dag}(t_n)-\rho_*^n)_{n=1}^N \|_{\ell_\omega^p}
    \end{align*}
    For the term $\mathrm{I}_3$, Assumption \ref{assum:err} (ii) and condition   \eqref{eqn:positive_data} yield that
\begin{align*}
|\mathrm{I}_3^n|
\le & c| (g_\delta(t_n)-g(t_n)) u(x_0,t_n;\rho^\dag)| +c|g(t_n)(    u_h^n(x_0;\rho_*)-u(x_0,t_n;\rho^\dag)   )|\\
\le &  c \delta \|u(t_n;\rho^\dag)\|_{L^\infty\II} + c \|  u( t_n;\rho^\dag) -u_h^n( \rho_*)\|_{L^\infty\II}
\end{align*}
Lemma \ref{lem:pde_error} directly implies that
\begin{equation*}
|   u(x_0,t_n;\rho^\dag) - u_h^n(x_0;\rho^\dag)|\le c(h^2 |\log h|^3 +\tau t_n^{\al-1})
\end{equation*}
Take the $\ell^p_\omega$ norm of $(\mathrm{I}_3^n)_{n=1}^N$, we obtain
\begin{align*}
         \|(\mathrm{I}_3^n)_{n=1}^N\|_{\ell^p_{\omega}(\mathbb{R})}\le c\left(\delta+h^{2 }|\log h|^3+\tau^{\al+1/p} \right)+c \omega^{-\alpha}  \| ( \rho^{\dag}(t_n)-\rho_*^n)_{n=1}^N \|_{\ell_\omega^p}
\end{align*} 
    Now we consider $\mathrm{I}_4$. By Assumption \ref{assum:err} (ii) and condition \eqref{eqn:positive_data}, we derive
    \begin{align*}
        |\mathrm{I}_4^n|\le &\left|\frac{g_\delta(t_n)\partial_t^\al g(t_n)-g(t_n)\dtau g_\delta(t_n)}{g(t_n)g_\delta(t_n)} \right|\\
        \le & c\Big(|g(t_n)(   \partial_t^\al g(t_n) -\dtau g_\delta(t_n)   )| +  | (g_\delta(t_n)-g(t_n))\partial_t^\al g(t_n)|\Big)\\
        \le & c \Big( |\partial_t^\al g(t_n)-\dtau g_\delta(t_n)  |+\delta\Big).
    \end{align*}
    This and \cite[Lemma 4.3]{JinShinZhou:2023} imply 
$   \|(\mathrm{I}_4^n)_{n=1}^N\|_{\ell^p} \le c(\tau^{1/p}|\log\tau|+\delta   \tau^{-\alpha})$. 
    \red{Combining above estimates} yields 
    \begin{align*}
        \|( \rho^\dag(t_n)-\rho_*^n)_{n=1}^N\|_{\ell^p_\omega(\mathbb{R})}   \le  c\left(h^{2 }|\log h|^3 +\tau^{1/p}|\log\tau |+\tau^{-\al} \delta\right)
        +c \omega^{-\alpha}  \| (\rho^{\dag}(t_n)- \rho_*^n)_{n=1}^N \|_{\ell_\omega^p}.
    \end{align*}
\red{Letting $\omega$ be large enough and applying the equivalence of $\ell^p$ and $\ell^p_\omega$} lead to the desired estimate.
\end{proof}

The error analysis provided in Theorem \ref{thm:recon} offers practical guidance for selecting the discretization parameters $h$ (the spatial mesh size) and $\tau$ (the temporal step size), by properly balancing the terms, i.e. 
$$ \tau^{1/p}|\log \tau| \sim h^2|\log h|^3 \sim \tau^{-\al} \delta.$$ With the choice of discretization parameters $h \sim \delta^{1/(2p\al+2)}$ and $\tau \sim \delta^{p/(p\al+1)}$, we have following almost optimal error estimate
\begin{equation}\label{eqn:optimal}
\|( \rho^\dag(t_n)-\rho_*^n)_{n=1}^N \|_{\ell^p}    \le  c \delta^{1/(p\al+1)}|\log \delta|^3.
\end{equation}
It is important to note that the error estimate \eqref{eqn:inv-err} involves terms $ \tau^{1/p}|\log \tau|$ and $\delta \tau^{-\alpha}$. Therefore, an excessively large or small time step size can result in substantial errors in the numerical recovery.
This phenomenon is clearly demonstrated in the numerical experiments; see Figure \ref{fig:semi-1d} for illustration.


\section{Numerical experiments}\label{sec:numerics}
We now present some experimental results to demonstrate the analysis results. To generate exact measurement data $g(t) = u(x_0,t)$, we first solve the time-fractional partial differential equations (\ref{eqn:fde}) with some specified data $f$ and $u_0$, by employing the Galerkin finite element method for spatial discretization and  convolution quadrature generated by backward Euler for time discretization, as elaborated in Section \ref{sec:fully}, using a refined space-time grid for high precision. Then we add some noise to $g$ to get 
\begin{equation*}
g_\delta(t_n) = g(t_n) + \epsilon \xi(t_n),\quad n=1,\dots,N,
\end{equation*}
where $\{t_n =nT/N\}_{n=0}^N$ are the equally partition points of $[0,T]$, each $\xi(t_n)$ is uniformly distributed in $[-1,1]$ and $\epsilon\geq0$ indicates the relative noise level, i.e., $\epsilon = \max\{g(t_n), n=0,1,2,\cdots,N\}\times\delta/100$. Then, in order to reconstruct $\rho^\dag$ from the noisy measurement data $g_\delta$, we use the iterative algorithm in \eqref{eqn:iter_dis}. The iteration starts from the initial guess $q_0 \equiv 2$. Even though our numerical scheme and its analysis were done for the linear problem where the source term $f$ is simply a function of $x$, we tested our algorithm for both linear and nonlinear source term $f$. It is observed that the algorithm converges within 5 iterations for the linear case and 60 iteration for the nonlinear case.

We focus on a one dimensional linear equation with the domain $\Omega = (0,1)$ and the specified source and initial-boundary conditions $f(x) = 1 + 20x^2(1-x)^2$ and $u_0(x) = 2 + \cos{2\pi x} $. In order to test the applicability of the proposed method to different conditions than the Assumption \ref{assum:stab}, we run experiments on
three different potential functions:
\begin{itemize}
\item[(i)] Smooth potential: $\rho_1^\dag(t) =  \exp(\cos(5t))$.
\item[(ii)] Piecewise smooth and continuous potential
\begin{equation*}
\rho_2^{\dagger}(t) = \left\{\begin{aligned}
		& ~~~~~\tfrac{8}{T}t+0.7,\quad 0\leq t \leq \tfrac{T}{4},\\
		& -\tfrac{8}{T}t +4.7,\quad \tfrac{T}{4}\leq t \leq \tfrac{T}{2},\\
  		&~~~~~\tfrac{8}{T}t -3.3 ,\quad \tfrac{T}{2} \leq t \leq \tfrac{3}{4}T, \\
  		& -\tfrac{8}{T}t+8.7,\quad  \tfrac{3}{4}T\leq t \leq T.
	\end{aligned}\right.
\end{equation*}
\item[(iii)] Discontinuous potential: 
\begin{equation*}
\rho_3^{\dagger}(t) = \left\{\begin{aligned}
		& 1,\quad ~~~0\leq t < \tfrac{T}{4},\\
		& 2.5,\quad \tfrac{T}{4}\leq t < \tfrac{T}{2},\\
  		& 1.5,\quad \tfrac{T}{2} \leq t < \tfrac{3}{4}T, \\
  		& 2,\quad  ~~\tfrac{3}{4}T\leq t \leq T.
	\end{aligned}\right.\quad
\end{equation*}
\end{itemize}
For the figure of merit, we use the $\ell^2$-error $e(\rho_*)= \|(\rho^{\dagger}(t_n)- \rho_*^n)_{n=1}^N\|_{\ell^2}$ for the reconstruction $\rho_*$.
For noisy observational data satisfying \eqref{eqn:noise}, we take the temporal step size 
$\tau \sim \delta^{2/(2\alpha +1)}$ and spatial mesh size $h \sim  \delta^{1/2(2\alpha +1)} $ according to the error estimate  \eqref{eqn:inv-err}. Throughout, we choose $p=2$ for all experiments. 

Firstly, we provide a numerical experiment to examine the sharpness of the error bound \eqref{eqn:inv-err}. We fix $\delta = 0$ and investigate the effect of the predicted discretization error, i.e. $O(h^2|\log h|^3 + \tau^{1/p}|\log \tau |)$. The spatial convergence results are depicted in Fig. \ref{fig:conv-h-1d} where we took $T = 0.5$ and $\tau = T/800$. It demonstrates that the convergence is about $O(h^2)$ for all potentials and we can remark that convergence is irrelevant to the fractional order $\alpha$ which can be expected from the terms in the error analysis, $h^2|\log h|^3 + \tau^{1/p}|\log \tau |$.   
Next, the temporal convergence results are presented in Fig. \ref{fig:conv-t-1d}, where we fix $h=1/100$ and $T=0.5$. It exhibits that the convergence rate is comparable to $O(\tau^{0.5})$ for all potentials.
Fig. \ref{fig:conv-delta-1d} shows the convergence rate with respect to the noise level $\delta$. To this end, for a given $\tau$, we set $h \sim \tau^{0.25}$ and $\delta\sim\tau^{\alpha + 0.5}$, and then we change $\tau$ to adjust the noise level $\delta$ accordingly. In the figure, we can observe an $O(\delta^{0.5})$ empirical convergence rate. 

The reconstructions from noisy and noisy-free data are shown in Fig. \ref{fig:recon-1d} with $\alpha = 0.5$ and the optimal time step size $\tau = T/2^8$ with $T=0.5$. In the case of the exact data, the reconstructions follow closely to the exact potentials $\rho^{\dagger}$ except for those points where the potentials are not continuous or near the initial time $t=0$. For the case $\epsilon =0.1\%$, the numerical reconstructions show minor oscillations. It seems that the reconstruction method is rather unstable at those points of jump discontinuities including the starting time $t=0$ while it is quite stable on those points of continuous sharp corner. Also, Figure \ref{fig:recon-1d}(c) implies that the proposed reconstruction method could be applicable for discontinuous potentials for which the Assumption \ref{assum:err} is violated.

\begin{figure}[hbt!]
\centering
\setlength{\tabcolsep}{0pt}
\begin{tabular}{ccc}
\includegraphics[width=.33\textwidth]{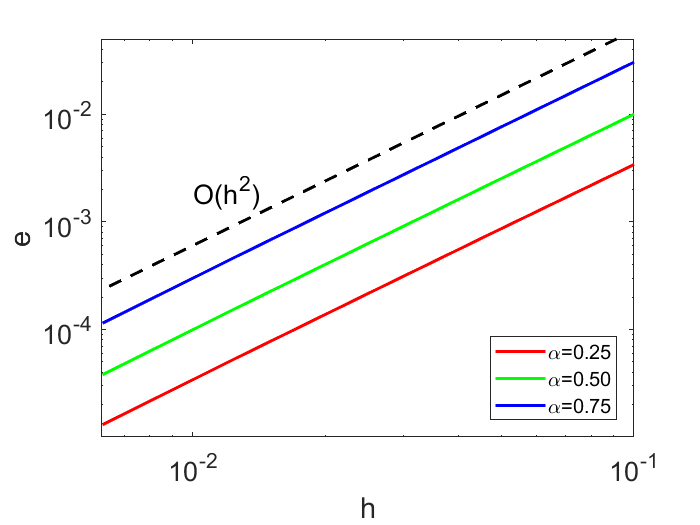} &
\includegraphics[width=.33\textwidth]{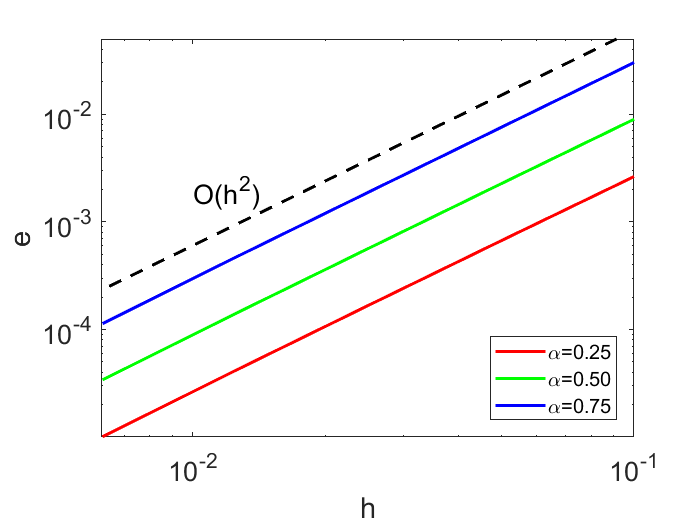} &
\includegraphics[width=.33\textwidth]{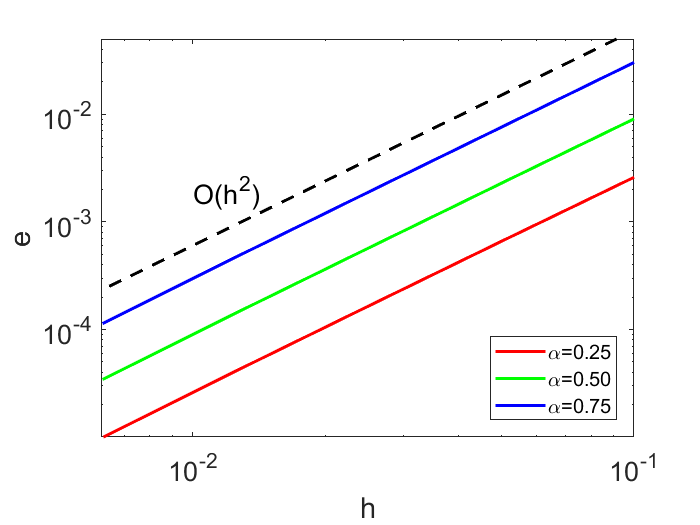} \\
(a) $\rho_1^\dag$ & (b) $\rho_2^\dag$ & (c) $\rho_3^\dag$
\end{tabular}
\caption{The spatial convergence for $\alpha =0.25, 0.5$, and $0.75$, with exact observational data. The black dashed line is the plot for $O(h^2)$ convergence rate. \label{fig:conv-h-1d}}
\end{figure}

\begin{figure}[hbt!]
\centering
\setlength{\tabcolsep}{0pt}
\begin{tabular}{ccc}
\includegraphics[width=.33\textwidth]{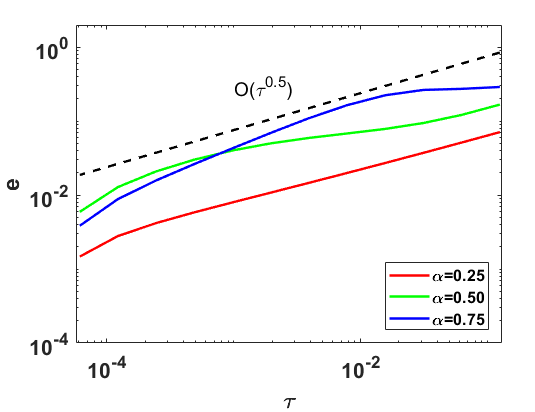} &
\includegraphics[width=.33\textwidth]{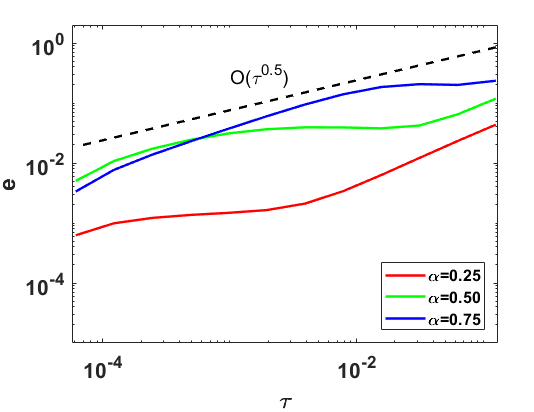} &
\includegraphics[width=.33\textwidth]{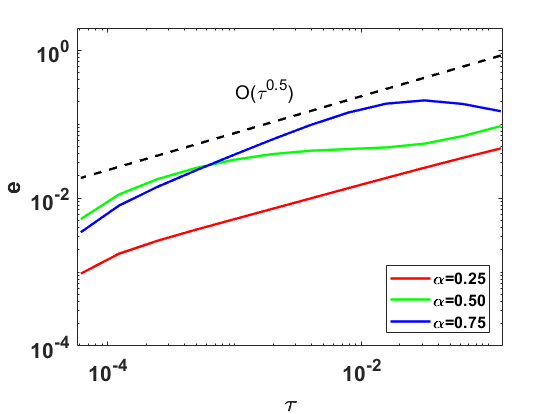} \\
(a) $\rho_1^\dag$ & (b) $\rho_2^\dag$ & (c) $\rho_3^\dag$
\end{tabular}
\caption{The temporal convergence for $\alpha =0.25, 0.5$, and $0.75$, with exact observational data.  The black dashed line is the plot for $O(\tau^{0.5})$ convergence rate. \label{fig:conv-t-1d} }
\end{figure}

\begin{figure}[hbt!]
\centering
\setlength{\tabcolsep}{0pt}
\begin{tabular}{ccc}
\includegraphics[width=.33\textwidth]{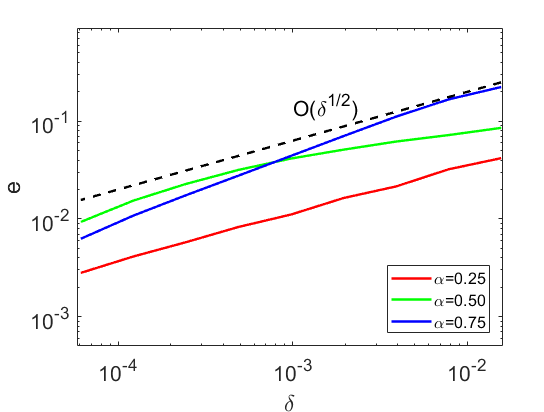} &
\includegraphics[width=.33\textwidth]{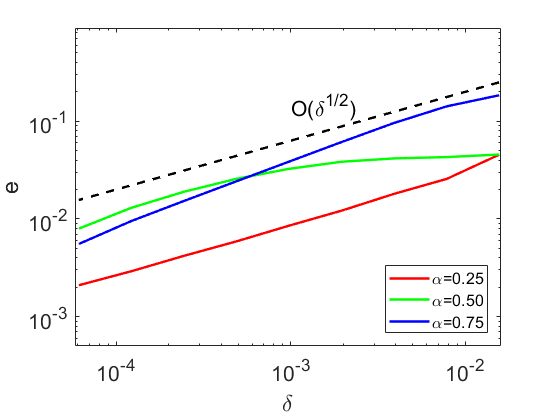} &
\includegraphics[width=.33\textwidth]{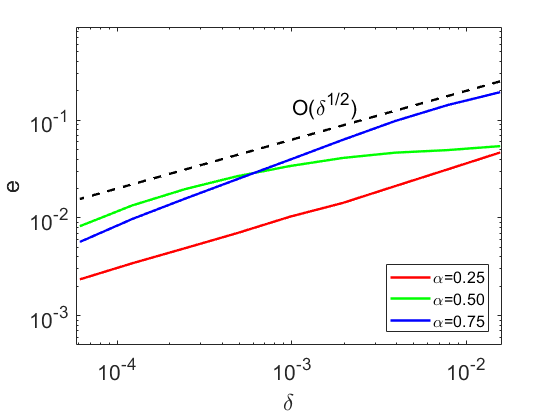} \\
(a) $\rho_1^\dag$ & (b) $\rho_2^\dag$ & (c) $\rho_3^\dag$
\end{tabular}
\caption{The convergence with respect to noise level $\delta$ for $\alpha =0.25, 0.5$ and $0.75$. The approximation is obtained by setting the discretization parameters $h$ and $\tau$ according to \eqref{eqn:optimal}.  The black dashed line is the plot for $O(\delta^{1/2})$ convergence rate.  \label{fig:conv-delta-1d}}
\end{figure}


\begin{figure}
\centering
\setlength{\tabcolsep}{0pt}
\begin{tabular}{ccc}
\includegraphics[width=.33\textwidth]{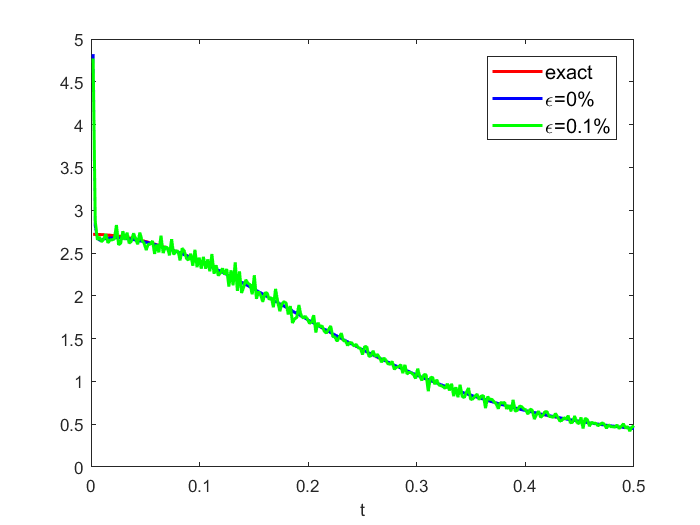} &
\includegraphics[width=.33\textwidth]{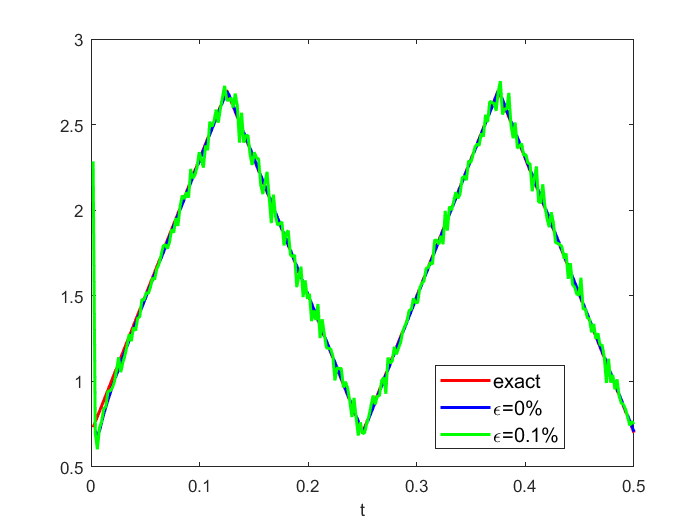} & 
\includegraphics[width=.33\textwidth]{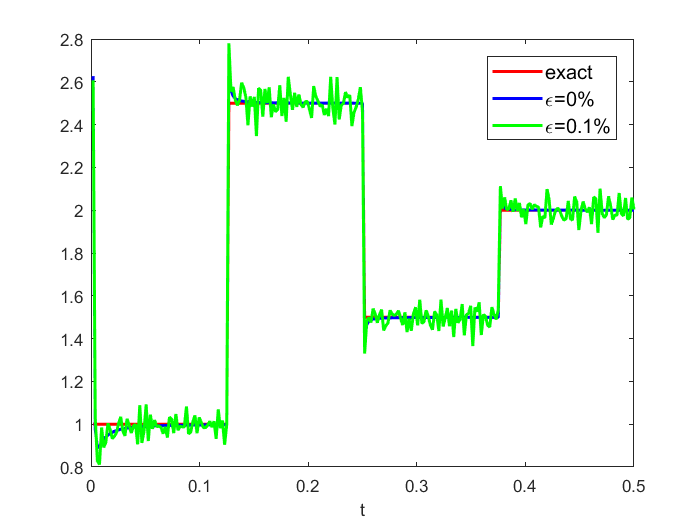}\\
(a) $\rho_1^\dag $ & (b) $\rho_2^\dag$ & (c) $\rho_3^\dag$
\end{tabular}
\caption{The reconstruction results from the noisy and noisy-free data, with $\alpha=0.5$. Taking optimal time step size $\tau=T/2^8$ when $T=0.5$. The lines in red indicate the exact potentials, lines in blue are reconstructions from the noisy-free data, and the green lines are reconstructions from the noisy data.\label{fig:recon-1d}}  
\end{figure}

The theoretical analysis in Theorem \ref{thm:recon} indicates the fractional order $\alpha$ has a significant impact on the convergence of the proposed reconstruction scheme. Indeed, the factor $\omega^{-\alpha}$ in \eqref{eqn:conv-dis} illustrates that the iteration will converge  much faster when we choose larger $\alpha$ or $\omega$.  Since the weighted $\ell^2_{\omega}$-norm is utilized in the theoretical analysis, we study the convergence in $\ell^2_{\omega}$-norm and compare the results with the ones in $\ell^2$-norm.
For the linear problems, we observed that the convergence is so very fast (within 5 iterations) that the comparison between $\ell^2$-norm and $\ell^2_{\omega}$-norm are not so noticeable. Instead, we test the algorithm for a nonlinear problem 
 \begin{equation}\label{eqn:fde-nonlinear}
 \left\{\begin{aligned}
     \partial_t^\alpha u -  \Delta u  + \rho(t)u&=f(u), &&\mbox{in } \Omega\times{(0,T]},\\
      \partial_\nu u&=0,&&\mbox{on } \partial\Omega\times{(0,T]},\\
    u(0)&=u_0,&&\mbox{in }\Omega,
  \end{aligned}\right.
 \end{equation}
where we set the source term by $f(u) = (u-1)(u-3)$. Then the iterative reconstruction scheme for the nonlinear problem is as follows: starting with any initial guess $\rho_0=(\rho_0^n)_{n=1}^N $ in the admissible set $\mathcal{B}$, update $\rho_{k+1}=(\rho_{k+1}^n)_{n=1}^N $   by 
\begin{equation*}\label{eqn:iter_dis_nonlinear}
\rho_{k+1}^n=P_{\mathcal{B}}\left[\frac{f(u^{n-1}_h(x_0;\rho_k))+\Delta_h u_h^n(x_0;\rho_k)-\dtau g_\delta(t_n)}{g_\delta(t_n)} \right],\quad n=1,\dots,N.
\end{equation*}
Fig. \ref{fig:conv-fp} depicts the results with the nonlinear problem. In the figure, plots in the left column are for the exact data and the ones in the right column are for the noisy measurement. The top row exhibits the $\ell^2$ error, and the bottom row presents the  $\ell^2_{\omega}$ error, with $\omega = 10$. In all cases, we set $T = 5$ and $\tau = T/2^{10}$. It is observed that the convergence is much faster in $\ell^2_{\omega}$-norm for both exact data and noisy data, and we can observe the linear convergence consistently for the weighted $\ell^2_{\omega}$-norm.

The error estimate delineated in \eqref{eqn:inv-err} underscores that the regularizing effect for solving inverse problems is largely attributed to the time discretization. Therefore, a judicious selection of the time step size $\tau$ is paramount for the approximation $\rho_*$ to attain optimal accuracy. This regularizing effect is visually depicted in Fig. \ref{fig:semi-1d}, where the reconstruction error diminishes with a decrease in step size $\tau$ up to a point, beyond which it starts to rise again, underscoring the critical nature of optimizing $\tau$. An optimally chosen step size yields reconstructions that are not only accurate but also exhibit minimal oscillations, thereby validating the conditional stability of the inverse problem as stated in Theorem \ref{thm:stab}. Conversely, too large or too small step size would lead to substantial errors in reconstructions—either from the pronounced discretization error, as indicated by the term $\tau^{1/p}|\log \tau|$, or from the amplified effect of noise, as represented by the term $\tau^{-\alpha}\delta$.

\begin{figure}[hbt!]
\centering
\setlength{\tabcolsep}{0pt}
\begin{tabular}{cc}
\includegraphics[width=.38\textwidth]{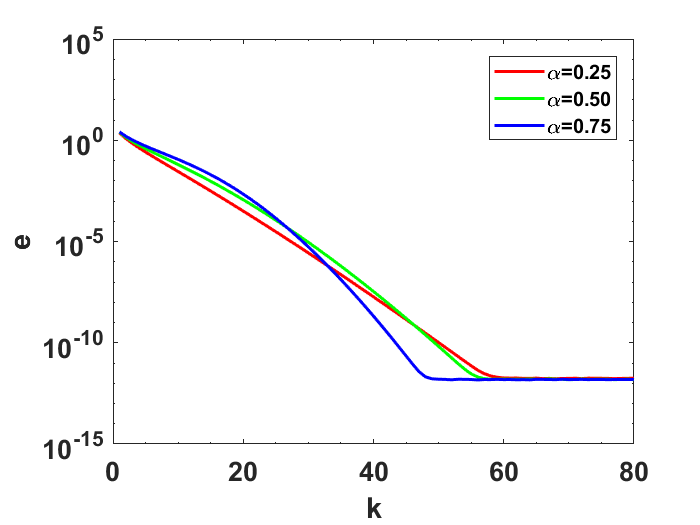} &
\includegraphics[width=.38\textwidth]{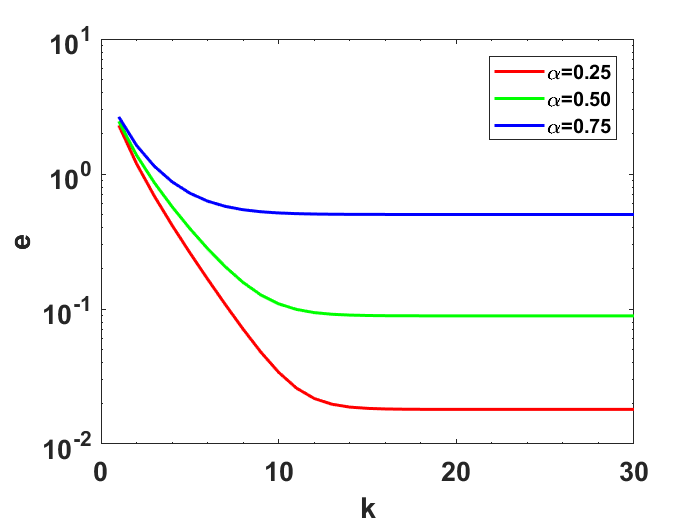} \\
\includegraphics[width=.38\textwidth]{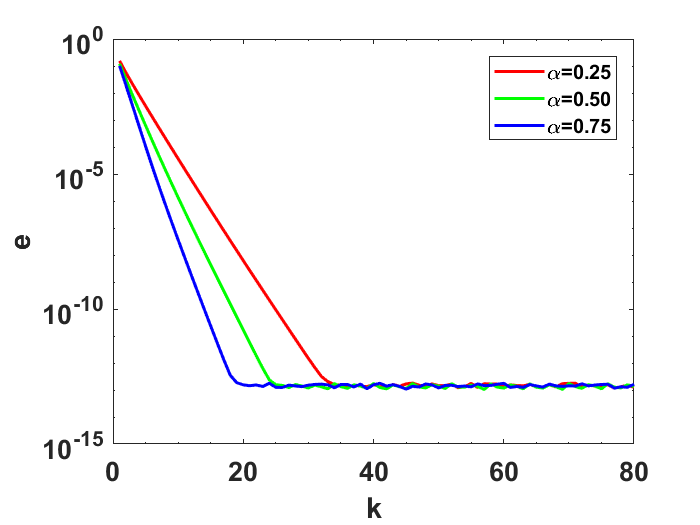} &
\includegraphics[width=.38\textwidth]{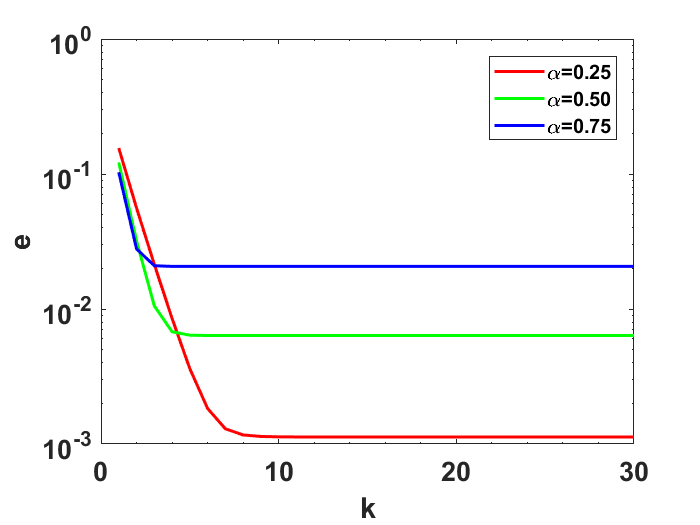} \\
(a) $\epsilon = 0\%$ & (b) $\epsilon = 1\%$
\end{tabular}
\caption{The decay of error throughout the iterations. $k$ denotes the number of iterations. The first row: errors in $\ell^p$. The second row: errors in $\ell^p_\omega $ with $\omega=10$. The first column: errors from the exact data. The second column: errors from the noisy data.
\label{fig:conv-fp}}
\end{figure}

\begin{figure}[hbt!]
\centering
\setlength{\tabcolsep}{0pt}
\begin{tabular}{ccc}
\includegraphics[width=.33\textwidth]{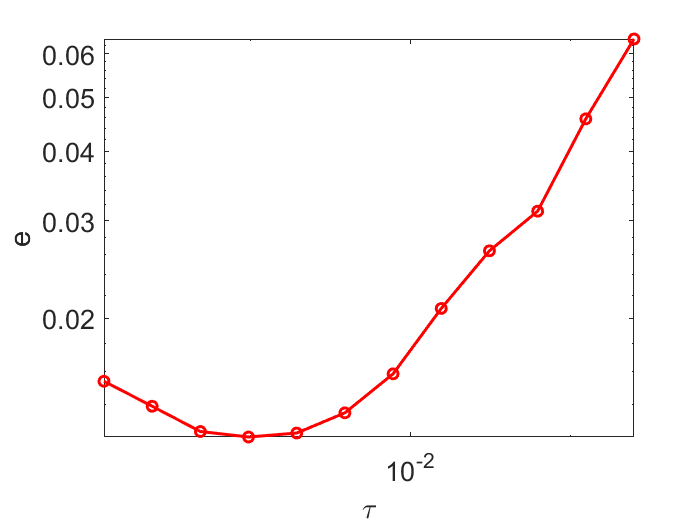} &
\includegraphics[width=.33\textwidth]{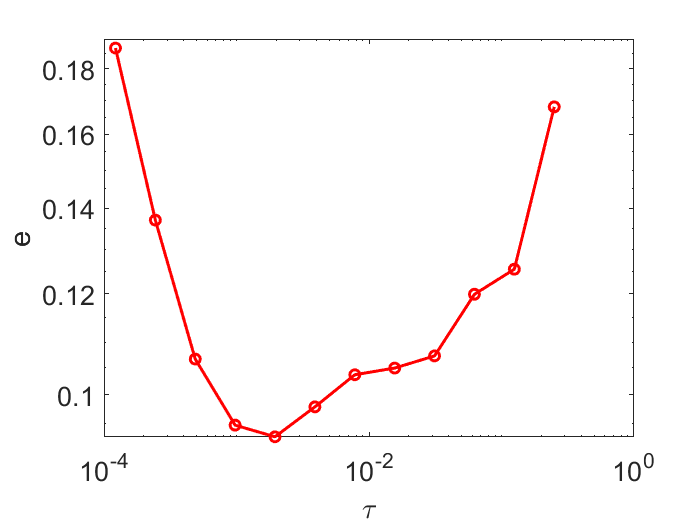} & 
\includegraphics[width=.33\textwidth]{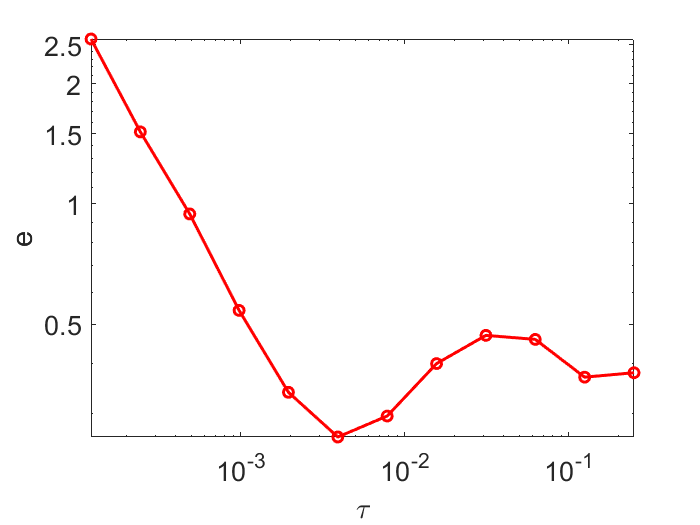}\\
\includegraphics[width=.33\textwidth]{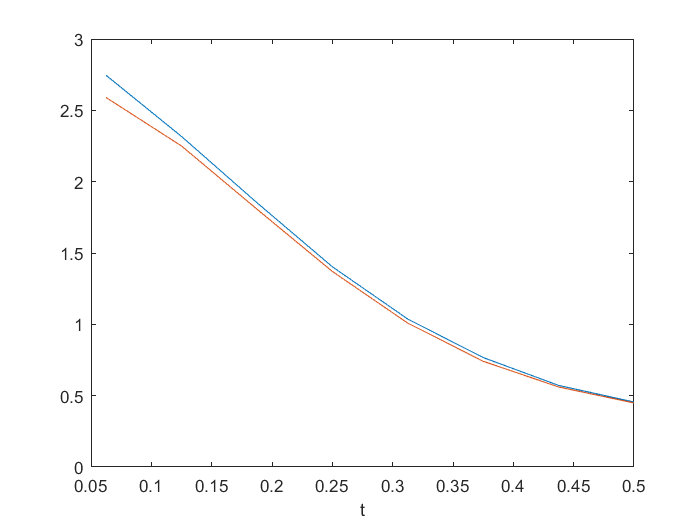} &
\includegraphics[width=.33\textwidth]{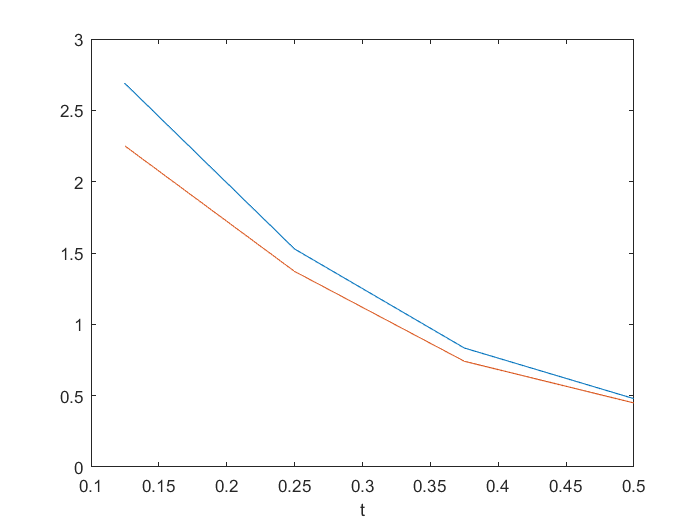} &
\includegraphics[width=.33\textwidth]{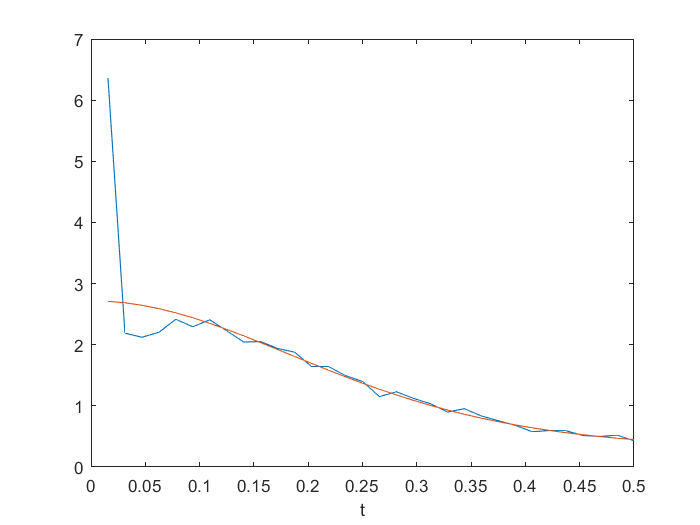} \\
\includegraphics[width=.33\textwidth]{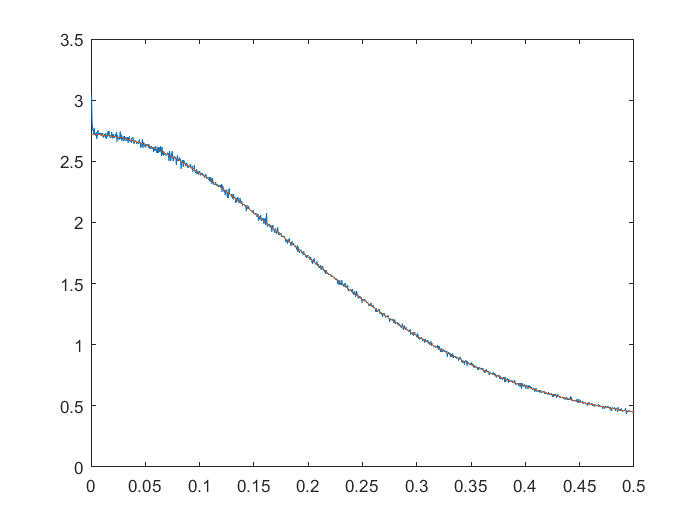} &
\includegraphics[width=.33\textwidth]{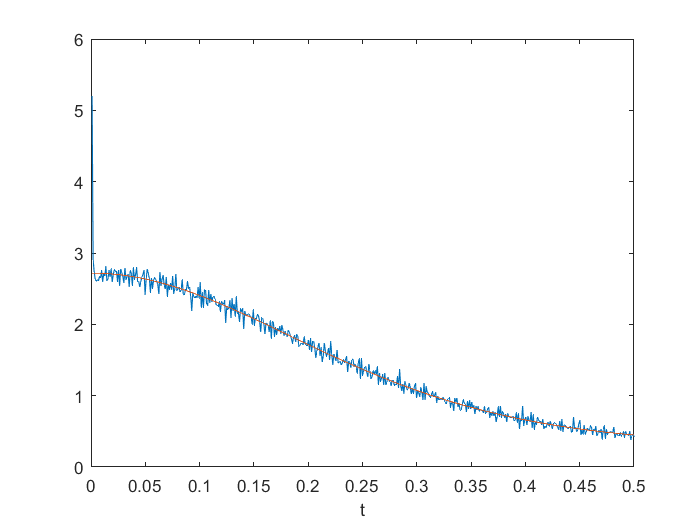} &
\includegraphics[width=.33\textwidth]{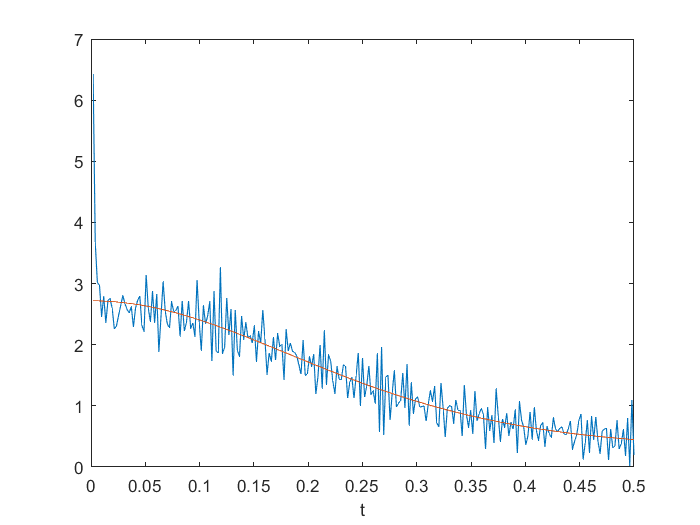} \\
\includegraphics[width=.33\textwidth]{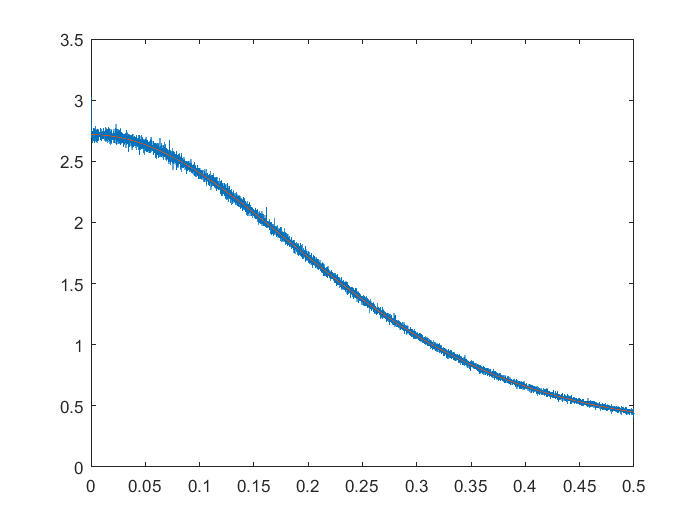} &
\includegraphics[width=.33\textwidth]{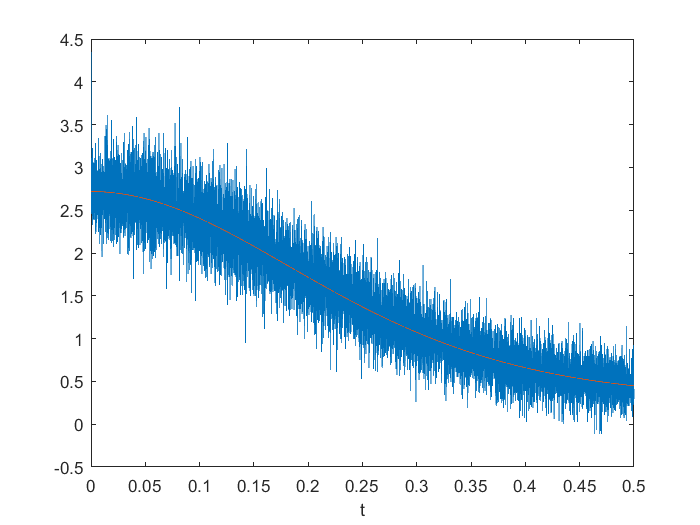} &
\includegraphics[width=.33\textwidth]{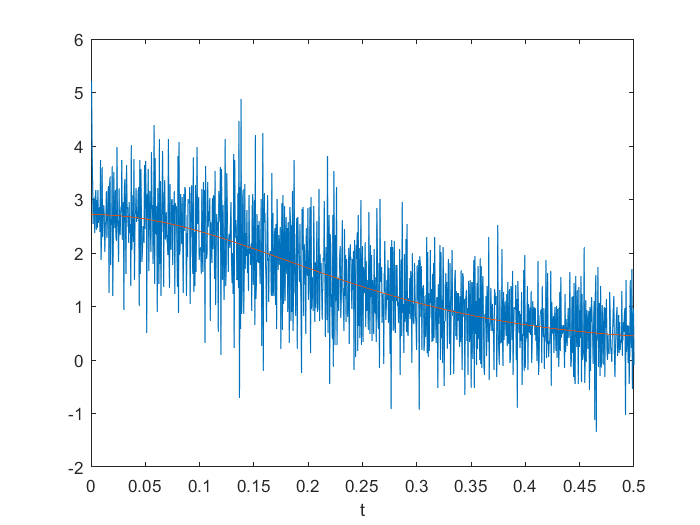} \\
(a) $\alpha=0.25$ & (b) $\alpha=0.5$ & (c) $\alpha=0.75$
\end{tabular}
\caption{The approximation with various time step size, for noisy data with $\epsilon=0.1\%$, 
at three fractional orders, $\alpha=0.25$, $\alpha=0.5$ and $\alpha=0.75$. Top row: 
$\ell^2$ error of the approximations vs. the time step size $\tau$. The next
three rows:  the reconstructions with different time discretization levels. From top to bottom, the total number $N$ of time steps 
is $2^{3}$, $2^{10}$ and $2^{13}$ for $\alpha=0.25$; $2^{2}$, $2^{9}$ and $2^{13}$ for 
$\alpha=0.5$; and  $2^{5}$, $2^{8}$ and $2^{10}$ for $\alpha=0.75$, the regularization is excessive (too large $\tau$), optimal (ideal $\tau$), and insufficient (too small $\tau$), in that order. The third row shows the reconstructions with the smallest error. \label{fig:semi-1d} }
\end{figure}

\section{Concluding remark}
In the current work, we focused on the reconstruction of a time-varying potential function in the time-fractional diffusion model from observations taken at a single point. By applying a set of reasonable assumptions to the data, we  derived a Lipschitz type conditional stability. Furthermore, drawing inspiration from the stability analysis, we proposed a iterative algorithm to approximately recover the potential and established a comprehensive error analysis of the discrete reconstruction, ensuring that the approximation error is congruent with the stability estimate we have established. Numerical tests were carried out to support and enhance the theoretical analysis.

Many interesting questions still remain open. For instance, recovering the spatially-dependent potential from a single-point observation presents a significant interest. Such problems are anticipated to exhibit stronger ill-posedness, and conducting their analysis would pose greater challenges, especially in terms of the numerical analysis of fully discrete schemes. Additionally, the identification of multiple coefficients using single or multiple observations presents a compelling avenue of research, each with its unique degree of ill-posedness. These interesting questions are left for future exploration.

\bibliographystyle{abbrv}

\end{document}